\newtheorem{theorem}{Theorem}[section]
  \newtheorem{lemma}[theorem]{Lemma}
  \newtheorem{definition}[theorem]{Definition}
\title{Spanning trees without adjacent vertices of degree 2}
\author{Kasper Szabo Lyngsie\footnote{Department of Applied Mathematics and Computer Science, Technical University of Denmark, DK-2800 Lyngby, Denmark. E-mail addresses: ksly@dtu.dk, marmer@dtu.dk. The second author was supported by ERC Advanced Grant GRACOL, project number 320812, and by the Danish Council for Independent Research, Natural Sciences, grant DFF-8021-00249, AlgoGraph.}, $\text{Martin Merker}^*$}
\date{}
\begin{document}

\maketitle

\begin{abstract}
Albertson, Berman, Hutchinson, and Thomassen showed in 1990 that there exist highly connected graphs in which every spanning tree contains vertices of degree 2. Using a result of Alon and Wormald, we show that there exists a natural number $d$ such that every graph of minimum degree at least $d$ contains a spanning tree without adjacent vertices of degree 2. Moreover, we prove that every graph with minimum degree at least 3 has a spanning tree without three consecutive vertices of degree 2.
\end{abstract}

\section{Introduction}
All graphs in this paper are simple and finite unless stated otherwise. A \emph{homeo-morphically irreducible tree}, or simply a \emph{HIT}, is a tree without vertices of degree~2. HITs have been enumerated by Harary and Prins~\cite{harary} in 1959. A homeomorphically irreducible spanning tree of a graph is called a \emph{HIST}. The existence of HISTs in graphs with certain structures was studied by Hill~\cite{hill_1974} in 1974. Hill conjectured that any triangulation of the plane with at least 4 vertices contains a HIST. Malkevitch~\cite{malkevitch} made the even stronger conjecture that this also holds for all near-triangulations of the plane. Albertson, Berman, Hutchinson and Thomassen~\cite{Albertson} proved Malkevitch's conjecture in 1990 and asked the more general question whether every triangulation of a surface has a HIST. This was answered in the affirmative for the torus by Davidow, Hutchinson, and Huneke~\cite{hutchinson}, and for surfaces with sufficiently large representativity by Nakamoto and Tsuchiya~\cite{Naka}. A graph is \emph{locally connected}, if the neighbourhood of every vertex induces a connected subgraph. Chen, Ren, and Shan~\cite{chen_ren_shan_2012} proved the stronger result that any locally connected graph has a HIST, which implies that every triangulation of a surface has a HIST. Moreover, Chen and Shan~\cite{CHEN} proved that any graph where every edge is in two triangles contains a HIST, answering a question by Albertson et al.~\cite{Albertson}. Both proofs can also be found in the PhD thesis of Shan~\cite{Shan}. A graph is cyclically $k$-edge-connnected if the deletion of any $k-1$ edges does not result in a graph with two distinct components containing a cycle.
Hoffmann-Ostenhof, Noguchi, and Ozeki~\cite{Ostenhof} answered another question by Albertson et al. by showing that for every natural number~$k$ there exists a cubic graph which is cyclically $k$-edge-connected without HISTs.

A graph $G$ is called $H$-free if $G$ contains no induced subgraph isomorphic to~$H$. Furuya and Tsuchiya~\cite{FURUYA20132206} characterized the set of $P_4$-free graphs containing a HIST, where $P_k$ denotes the path on $k$ vertices. This characterization was extended to $P_5$-free graphs by Diemunsch et al.~\cite{DIEMUNSCH201571}. Albertson et al.~\cite{Albertson} showed that for some constant~$c$, any connected graph with $n$ vertices and minimum degree at least $c \sqrt{n}$ contains a HIST. In contrast to this, they also constructed a $k$-connected graph with no HIST for every natural number $k$. 

\begin{figure}
    \centering
    \includegraphics[scale = 0.6]{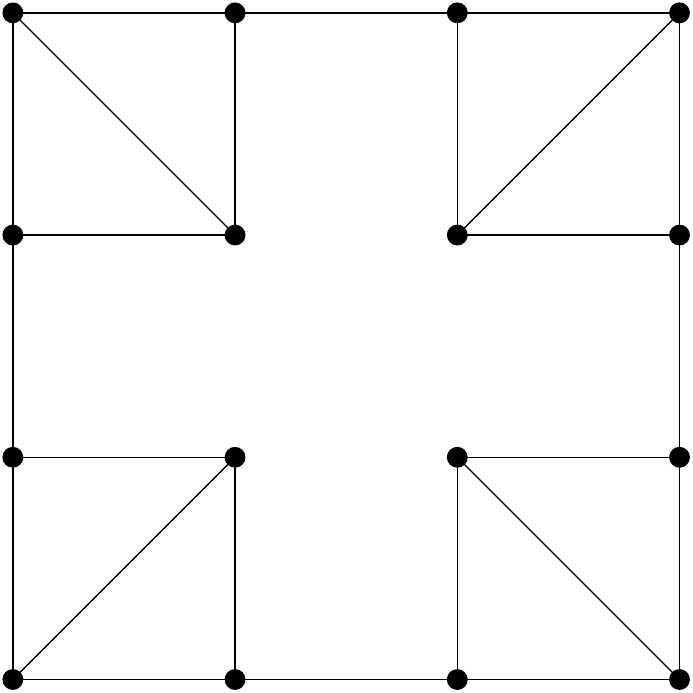}
    \caption{A cubic graph where every spanning tree has two adjacent vertices of degree~2.}
    \label{fig:counter_ex}
\end{figure}

In this paper we take a different approach and study a natural relaxation of the notion of homeomorphically irreducible spanning trees. We show that large constant minimum degree is sufficient for the existence of a spanning tree which is not far away from being homeomorphically irreducible. To be more precise, we construct spanning trees which can be obtained from HITs by subdividing each edge at most once. In other words, the vertices of degree 2 in the spanning tree form an independent set.

\begin{theorem} \label{thm:noadjdegree2}
There exists a natural number $d$ such that every connected graph with minimum degree at least $d$ has a spanning tree $T$ without adjacent vertices of degree 2.
\end{theorem}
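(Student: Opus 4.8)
The plan is to combine a star-factor theorem of Alon and Wormald with a careful gluing argument. Let $d$ be large enough that, by the theorem of Alon and Wormald, every graph of minimum degree at least $d$ has a spanning forest whose components are stars, each with at least three edges. Given a connected graph $G$ of minimum degree at least $d$, first apply this to obtain such a star forest $F$, with stars $S_1,\dots,S_k$ and respective centres $c_1,\dots,c_k$ (so $S_i\cong K_{1,m_i}$ with $m_i\ge 3$). If $k=1$ then $F$ itself is a spanning HIT, hence a HIST, and in particular has no adjacent vertices of degree $2$; so assume $k\ge 2$ from now on.

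The rest of the argument extends $F$ to the desired spanning tree by adding $k-1$ edges joining distinct stars (``connecting edges''). Two observations reduce the task to a clean combinatorial one. First, every centre $c_i$ retains its $m_i\ge 3$ star-edges, so no centre can have degree $2$ in the final tree; hence a vertex has degree $2$ there precisely when it is a star-leaf incident with exactly one connecting edge --- call such a leaf a \emph{midpoint}. Second, inside any star the only tree-edges are the star-edges, each joining a leaf to a (non-midpoint) centre, so two midpoints can be adjacent in the final tree only along a connecting edge. Thus it suffices to add the connecting edges so that the result is a spanning tree in which no connecting edge joins two midpoints.

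To do this I would grow a subtree $\mathcal T$, initially a single star, adding one or a few stars at a time while keeping $\mathcal T$ ``good'', meaning that no connecting edge inside $\mathcal T$ joins two midpoints. If some star $S\notin\mathcal T$ has an edge to $V(\mathcal T)$ with at least one endpoint a centre, attach $S$ (together with all of its star-edges) along such an edge; that connecting edge then has a centre endpoint, so it cannot join two midpoints, and one checks that the one new midpoint possibly created in $\mathcal T$ (necessarily a former degree-$1$ vertex) has both of its neighbours among centres and is therefore safe. The remaining, delicate case is when \emph{every} edge leaving $V(\mathcal T)$ joins two star-leaves: to bring in a star $S$ here one attaches it along an edge from a leaf $\ell'$ of $S$ either to a vertex of $\mathcal T$ that will not become a midpoint, or --- bringing in a further star along a second edge at $\ell'$ --- so that $\ell'$ itself ends up with degree $3$; in both subcases no new bad pair is created, and the large minimum degree is what makes at least one of these options available.

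The main obstacle is exactly this last point: showing that the growth process never stalls, i.e.\ that a proper good subtree $\mathcal T$ can always be enlarged. This is where the hypothesis of large minimum degree (rather than merely $3$) is essential: it guarantees that every star emits many edges and every leaf has many neighbours, leaving enough freedom to choose a connecting edge, or a pair of connecting edges sharing a leaf, that avoids creating two adjacent vertices of degree $2$. Carrying this out needs a case analysis according to whether the relevant leaves already carry connecting edges, plus a short, essentially trivial treatment of ``huge'' stars (those with at least $d$ leaves), whose centres may have no neighbour outside the star but which, unless they already exhaust $V(G)$, still send out many leaf-edges. Once stalling is ruled out, the process terminates in a spanning tree with the required property.
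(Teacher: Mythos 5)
Your central reduction --- keep every star edge and add exactly $k-1$ connecting edges between distinct stars, so that it suffices to ensure no connecting edge joins two midpoints --- is not achievable in general, and this is where the proof breaks. The star-cover is whatever the Alon--Wormald theorem happens to provide, so your growth argument must work for an arbitrary cover; but consider the graph consisting of two stars $S_1,S_2$ with centres $c_1,c_2$ and leaf sets $X=\{x_1,\dots,x_N\}$, $Y=\{y_1,\dots,y_N\}$, together with a bipartite graph of minimum degree $d-1$ between $X$ and $Y$. This graph is even triangle-free, has minimum degree at least $d$, and $\{S_1,S_2\}$ is a legitimate star-cover with stars of any prescribed size. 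Every edge of $G$ between the two stars is a leaf--leaf edge, and since there are only two stars, exactly one connecting edge can be added; its endpoints then both have degree $2$ in the resulting tree and are adjacent. None of your rescue options applies: a second connecting edge at either endpoint would have to go back to the other (unique) star and would close a cycle, and there is no centre to attach to. So the process stalls no matter how large $d$ is and no matter what star-size threshold you impose, even though the graph itself does have a spanning tree without adjacent vertices of degree $2$ (root one side at $x_1$, say). In short, the claim ``a vertex has degree 2 in the final tree precisely when it is a star-leaf with one connecting edge'' is only useful if the final tree contains all star edges, and that constraint is too rigid.

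The missing idea is that the tree must be allowed to \emph{discard} star edges when absorbing a star: it should only be required to cover the vertex sets of the absorbed stars, not to contain the stars as subgraphs. This is how the paper proceeds: it first passes to a maximal bipartite spanning subgraph (so that, unlike in non-triangle-free examples, a leaf cannot hide all its degree inside its own star), takes stars of size at least $6$, and grows a maximal tree $T$ subject to (1) no adjacent degree-2 vertices, (2) $V(G-T)$ is a union of stars of the cover, and (3) every leaf of $T$ with a neighbour outside $T$ has its $T$-neighbour of degree at least $5$ in $T$. Outside stars are then attached through a leaf by \emph{two} edges to $T$ while an old edge of $T$ is deleted, and an almost balanced orientation of an auxiliary multigraph controls how many deletions any single vertex of $T$ suffers; condition (3) and the size-6 requirement are exactly what make these deletions safe. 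In the two-star example above this flexibility is what saves the day: $S_2$ is attached through a leaf $u$ by two edges to $T=S_1$ and the star edge from $c_2$ to a second leaf is deleted --- a move your framework forbids. Without the ability to delete star edges, together with bookkeeping of the kind in conditions (2)--(3) that keeps deletions harmless, the proposal cannot be completed.
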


Figure~\ref{fig:counter_ex} shows a cubic graph $G$ in which any spanning tree contains adjacent vertices of degree 2. This can be seen in the following way. Let $C_0, C_1, C_2, C_3$ be the four 4-cycles of $G$ and for $i \in \{1,2,3,4\}$ let $e_i$ be the edge joining  $C_{i}$ and $C_{i+1}$ (indices are taken modulo 4). Suppose $T$ is a spanning tree in $G$. We can assume that $e_0,e_1,e_2 \in E(T)$ and that $T$ restricted to $C_1 \cup \{e_1\} \cup C_2$ is connected. If $T$ has no adjacent vertices of degree 2, then it is easy to see that the restrictions of $T$ to $C_1$ and $C_2$ must be stars. The ends of $e_1$ now form two adjacent vertices of degree~2 in $T$, a contradiction.  Thus the number $d$ in Theorem~\ref{thm:noadjdegree2} has to be at least~4. Theorem~\ref{thm:min_degree_3_no_3_in_a_row} shows however, that minimum degree 3 is sufficient for the existence of a spanning tree with no path of length at least 2 consisting of vertices of degree~2.

\begin{theorem}\label{thm:min_degree_3_no_3_in_a_row}
Every connected graph with minimum degree at least 3 contains a spanning tree $T$ without three consecutive vertices of degree 2.
\end{theorem}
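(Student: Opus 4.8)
The plan is to argue by minimality. Among all spanning trees of $G$, choose $T$ so that the number of vertices of degree $2$ is as small as possible, and, subject to this, so that a second parameter is as small as possible as well (a natural candidate is $\sum_P |V(P)|^2$, where $P$ ranges over the maximal paths of $T$ all of whose vertices have degree $2$). Such a $T$ exists since $G$ is connected, and it suffices to prove that $T$ has no three consecutive vertices of degree $2$. Suppose it does; then there is a maximal path $Q = q_0 q_1 \cdots q_k q_{k+1}$ in $T$ with $k \geq 3$, where $q_1,\dots,q_k$ all have degree $2$ and $\deg_T(q_0),\deg_T(q_{k+1}) \neq 2$ (if $Q$ reaches a leaf of $T$ we simply allow $q_0$ or $q_{k+1}$ to be that leaf).

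The work is done by edge exchanges exploiting $\delta(G) \geq 3$: since $\deg_T(q_i) = 2 < \deg_G(q_i)$, each $q_i$ with $1 \le i \le k$ is incident with a non-tree edge $e_i$. First I would dispose of an interior vertex $q_i$ ($2 \le i \le k-1$) whose edge $e_i$ has its second end $z$ \emph{not} a leaf of $T$: deleting from the unique cycle of $T+e_i$ the edge incident with $q_i$ gives a spanning tree in which $q_i$ still has degree $2$, one former tree-neighbour of $q_i$ has become a leaf, and the degree of $z$ has only increased, so the number of degree-$2$ vertices has strictly dropped, contradicting the choice of $T$. Hence every non-tree edge at an interior vertex of $Q$ ends at a leaf of $T$. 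Second, if such a leaf $z$ (the second end of some $e_i=q_iz$) has its unique tree-neighbour $y$ of degree $\neq 3$, then replacing $yz$ by $q_iz$ (``re-hanging'' $z$ under $q_i$) again strictly decreases the number of degree-$2$ vertices, because $q_i$ becomes a branch vertex while $y$ either becomes a leaf or keeps degree at least $3$. So we are left with a rigid picture: every non-tree edge at an interior vertex of $Q$ joins it to a leaf of $T$ whose unique tree-neighbour has degree exactly $3$.

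The remaining task---deriving a contradiction from this rigidity---is where the difficulty lies. The natural move is to combine several of these re-hangings. For example, if $k \geq 4$, take interior vertices $q_i,q_{i+1}$ with non-tree edges to leaves $z,z'$ whose tree-neighbours are $y,y'$; re-hanging both turns $q_i$ and $q_{i+1}$ into branch vertices, and when $y = y'$ (so $y$ has degree $3$ with two leaf-neighbours) this loses two degree-$2$ vertices and gains none, a contradiction. When $y \neq y'$, both $y$ and $y'$ drop to degree $2$, so the number of degree-$2$ vertices is unchanged, and one must instead show that breaking the long path $Q$ strictly decreases the secondary parameter---note that $Q$'s contribution to $\sum_P|V(P)|^2$ falls from $k^2$ to at most $(k-2)^2$, and one has to control the increase caused where $y$ and $y'$ merge their remaining branches---or else chain another exchange, using that the leaf $z$, having degree at least $3$ in $G$, carries a further non-tree edge. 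The case $k = 3$ is handled by the same device applied to $q_2$ together with $q_1$ or $q_3$. The main obstacle is precisely this final configuration, where every single exchange is neutral for the primary parameter; making the argument go through requires either pinning down a secondary potential that provably strictly decreases while the primary one does not increase, or finding, in each such configuration, a pair or chain of exchanges that does decrease the primary parameter.
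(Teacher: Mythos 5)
Your argument is a proof strategy rather than a proof: by your own account, the decisive case is left open. The edge-exchange reductions you do carry out are mostly sound (sending a non-tree edge from an interior $q_i$ to a non-leaf strictly decreases the number of degree-$2$ vertices, as does re-hanging a leaf whose tree-neighbour has degree $\neq 3$), but they only establish a ``rigid picture,'' not a contradiction. In the surviving configuration every available single exchange is neutral for your primary parameter, and the secondary parameter $\sum_P |V(P)|^2$ is not shown to decrease --- indeed it plausibly increases, since when $y$ (of tree-degree $3$) loses its leaf it drops to degree $2$ and may concatenate two previously separate degree-$2$ runs into one long run, inflating the sum of squares by much more than the $k^2 \to (k-2)^2$ saving on $Q$. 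Two further points are glossed over: the vertices $q_1$ and $q_k$ are not covered by your first reduction (the fundamental cycle of a non-tree edge at $q_1$ may force you to delete $q_0q_1$, and if $\deg_T(q_0)=3$ then $q_0$ becomes a new degree-$2$ vertex, so the count need not drop), and the case $k=3$, where only $q_2$ is ``interior,'' is asserted rather than argued. Until a potential is exhibited that provably strictly decreases in the rigid configuration, there is no contradiction and hence no proof.

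For comparison, the paper does not use a tree-optimization/exchange argument at all. It strengthens the statement so that it becomes closed under induction (only vertices of degree at least $3$ in $G$ are required to avoid degree $2$ in $T$), reduces a minimal counterexample to minimum degree $3$, and then proves a structural lemma, based on the Thomassen--Toft theorem on non-separating induced cycles, guaranteeing one of three reducible substructures: a non-separating induced cycle avoiding the high-degree vertices, a non-separating induced path between two high-degree vertices, or a $W_a$-/$W_{a,b}$-configuration. Deleting the edges of such a structure (or the configuration) and applying induction yields the spanning tree. The fact that the published argument requires this global structural machinery, including explicitly catalogued exceptional configurations, is a strong indication that the purely local exchange approach cannot be completed without a substantially new idea for the neutral case.
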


We give a proof of Theorem~\ref{thm:noadjdegree2} in Section 2. The proof of Theorem~\ref{thm:min_degree_3_no_3_in_a_row} is presented in Sections 3 and 4, where Section 3 contains the main part and Section 4 contains the proof of a technical lemma.

\section{Spanning trees without adjacent vertices of degree 2}

In this section we prove that if a graph $G$ has sufficiently large minimum degree, then $G$ has a spanning tree in which the vertices of degree 2 form an independent set. The main tool in our proof is a theorem on the existence of large star-factors in graphs of large minimum degree. A \emph{star} is a tree with at most one vertex of degree greater than~1. A \emph{star-cover} of a graph $G$ is a collection of vertex-disjoint stars in $G$ covering all vertices of $G$. The \emph{size} of a star, and more generally of a graph, is its number of edges. Answering a question of Havet et al.~\cite{Havet2011}, the following theorem was proved by Alon and Wormald~\cite{Alon}.

\begin{theorem} [Alon, Wormald~\cite{Alon}] \label{thm:bigdegreebigstar}
For every natural number $d$, there exists a natural number $f(d)$ such that every graph of minimum degree at least $f(d)$ has a star-cover where every star has size at least $d$.
\end{theorem}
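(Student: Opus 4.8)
The plan is to reduce the existence of the desired star-cover to a covering/assignment problem and then to attack that problem with the probabilistic method. Fix $d$ and let $G$ have minimum degree at least $f(d)$, where $f(d)$ will be taken to be a large multiple of $d\log d$. A star-cover in which every star has size at least $d$ is nothing but a choice of a set $C\subseteq V(G)$ of \emph{centres} together with an assignment sending every vertex of $L:=V(G)\setminus C$ to an adjacent centre so that each centre receives at least $d$ vertices (its leaves); a centre and its assigned leaves form one star. So the whole problem splits into (1) choosing a good centre set $C$ and (2) producing the assignment.

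Step (2) is pure matching theory and I would dispose of it first. Given $C$, the assignment I want is a spanning subgraph of the bipartite ``centre--leaf'' graph in which every leaf has degree exactly $1$ and every centre has degree at least $d$. By the defect (transportation) form of Hall's theorem---equivalently by max-flow--min-cut, using that the underlying bipartite transportation problem has an integral optimum---such an assignment exists if and only if $C$ dominates $L$ (so every leaf can be assigned at all) and
\[
|N(B)\cap L|\ \ge\ d\,|B|\qquad\text{for every }B\subseteq C .
\]
Thus it suffices to find a set $C$ that dominates its complement and satisfies this expansion condition.

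Step (1) is where I would use randomness. Put each vertex into $C$ independently with probability $p$, chosen so that $p\cdot f(d)$ is a suitable multiple of $\log d$; then a fixed vertex fails to be dominated with probability at most $(1-p)^{f(d)}\le e^{-p f(d)}$, which is polynomially small in $d$, and a Chernoff bound shows that, with comparably small failure probability, every would-be centre has at least $f(d)/2$ leaf-neighbours while every would-be leaf has at most $O(p f(d))$ centre-neighbours. These last two facts are exactly what forces the expansion inequality: if each centre has at least $D$ leaf-neighbours and each leaf at most $D/d$ centre-neighbours (for a common threshold $D$), then double counting the edges between $B$ and $N(B)\cap L$ gives $|B|\,D\le |N(B)\cap L|\cdot D/d$, i.e.\ $|N(B)\cap L|\ge d|B|$. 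Since the three bad events---domination failure, too few leaf-neighbours, too many centre-neighbours---each depend only on the coin tosses in a single closed neighbourhood, I would try to avoid all of them simultaneously via the Lovász Local Lemma, which is what would make $f(d)$ independent of $|V(G)|$.

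The main obstacle is precisely that last sentence. Two bad events become dependent once their vertices lie within distance $2$, so the dependency degree in the Local Lemma is of order $\Delta^2$, and the maximum degree $\Delta$ cannot be bounded in terms of $d$ alone (already $K_{f(d),n}$ forces arbitrarily large degrees). High-degree vertices are, however, exactly the \emph{easy} ones---any vertex of very large degree can be made a centre and will trivially collect its $d$ leaves---so the fix I would pursue is to peel off stars greedily around the vertices of very high degree first, passing to a part of the graph whose maximum degree is bounded by a function of $d$, and only then run the random selection and the Local Lemma. Arranging the greedy peeling and the subsequent random rounding so that the contention bound $|N(\ell)\cap C|\le D/d$ survives near the removed high-degree vertices is the delicate heart of the matter, and is essentially the content of Alon and Wormald's proof. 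An appealing alternative that sidesteps the $\Delta$ issue entirely is an extremal argument: take a star-cover with all stars of size at least $d$ covering the most vertices, and show that an uncovered vertex---which has fewer than $d$ uncovered neighbours and, by optimality, no centre among its neighbours, hence at least $f(d)-d$ leaf-neighbours---can always be absorbed by releasing leaves from stars with slack or by re-centring a nearby star; here the hard case is when every nearby star has size exactly $d$.
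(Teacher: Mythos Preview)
This theorem is not proved in the paper at all: it is quoted as a result of Alon and Wormald (with the subsequent improvement by Nenadov noted), and then used as a black box in the proof of Theorem~\ref{thm:noadjdegree2}. So there is no ``paper's own proof'' against which to compare your proposal.

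As a sketch of the Alon--Wormald argument your outline is broadly on the right track---random selection of centres combined with the Local Lemma, together with a separate treatment of high-degree vertices---but by your own account it is not a proof: you correctly identify the key obstacle (the dependency degree in the Local Lemma scales with $\Delta^2$, which is not bounded in terms of $d$) and then defer its resolution, writing that arranging the greedy peeling so that the contention bound survives ``is the delicate heart of the matter, and is essentially the content of Alon and Wormald's proof.'' That is precisely the step that requires work, and your write-up does not supply it. The alternative extremal argument you float at the end is likewise left at the ``hard case'' without a resolution. So what you have is a reasonable plan, not a proof; for the purposes of this paper nothing more is needed, since the theorem is invoked as a citation.
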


Alon and Wormald showed that for every $\epsilon>0$ there exists a constant $c$ such that every graph of minimum degree at least $f(d)=cd^{3+\epsilon}$ has a star-cover where every star has at least $d$ edges. This result was improved by Nenadov~\cite{Nenadov} who showed that a minimum degree of $f(d)=c'd^2$ suffices, where $c'$ is a positive constant.

Given a star-cover $S$ of a graph $G$ in which every star has at least 3 edges, every spanning tree $T$ of $G$ containing $S$ does not contain three consecutive vertices of degree~2. An immediate consequence is a weaker version of Theorem~\ref{thm:min_degree_3_no_3_in_a_row} where the minimum degree 3 is replaced by a large constant. It seems plausible that by being more careful in the construction of $T$, one might obtain a spanning tree with even stronger properties. Alon and Wormald~\cite{Alon} asked in 2010 whether any graph with minimum degree $d$ has a spanning tree where all non-leaves have degree at least $cd/\log d$. However, Albertson et al.~\cite{Albertson} showed already in 1990 that large constant minimum degree does not even imply the existence of spanning trees without vertices of degree 2. Based on their examples, we construct for every natural number $k$ a series of graphs with arbitrarily large minimum degree where each spanning tree contains a vertex of degree~$i$ for every $i \in \{1,\ldots ,k\}$.

\begin{theorem}\label{thm:large_min_deg_small_degree_tree}
For all natural numbers $k,d$, there exists a graph $G(k,d)$ of minimum degree at least $d$ such that every spanning tree of $G(k,d)$ contains a vertex of degree $i$ for every $i\in \{1,\ldots ,k\}$.
\end{theorem}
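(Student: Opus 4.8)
The plan is to reduce Theorem~\ref{thm:large_min_deg_small_degree_tree} to the construction of a single ``gadget'' for each $i\in\{2,\ldots,k\}$. For each such $i$ I want a connected graph $\Gamma_i$ with $\delta(\Gamma_i)\ge d$ and a distinguished vertex $r_i$ such that in \emph{every} spanning tree $T$ of $\Gamma_i$ some vertex has degree exactly $i$, and moreover the witness can be taken different from $r_i$. Given such gadgets, I would let $G(k,d)$ be the disjoint union of $\Gamma_2,\ldots,\Gamma_k$ together with the $k-2$ edges $r_ir_{i+1}$ ($i=2,\ldots,k-1$), so that $G(k,d)$ is connected. Each added edge is a bridge, so every $\Gamma_i$ sits behind a bridge in $G(k,d)$; hence for any spanning tree $T$ of $G(k,d)$ the restriction of $T$ to $E(\Gamma_i)$ is again a spanning tree of $\Gamma_i$ (a $T$-path between two vertices of $\Gamma_i$ cannot leave $\Gamma_i$ and return across a single bridge). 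Applying the gadget property inside each $\Gamma_i$ then produces, in the \emph{one} tree $T$, a vertex of degree exactly $i$ for every $i\in\{2,\ldots,k\}$, the witness lying strictly inside $\Gamma_i$ so that its degree in $G(k,d)$ equals its degree in $\Gamma_i$. A vertex of degree~$1$ is present because every tree has a leaf, and $\delta(G(k,d))\ge\min_i\delta(\Gamma_i)\ge d$ since adding bridges only raises degrees.

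It remains to build $\Gamma_i$, and this is where the examples of Albertson et al.~\cite{Albertson} enter. Let $B$ be their $d$-connected graph with no HIST; being $d$-connected it has minimum degree at least $d$, and ``no HIST'' means precisely that every spanning tree of $B$ contains a vertex of degree exactly~$2$. For $i=2$ I would take $\Gamma_2=B$. For general $i$ the idea is to \emph{inflate} the forced degree-$2$ vertex up to degree~$i$: form $\Gamma_i$ from $B$ by attaching to every vertex $v$ of $B$ exactly $i-2$ pairwise disjoint new cliques on $d+1$ vertices, each joined to $v$ by one edge, and take $r_i$ to be a vertex of one of these pendant cliques. Every vertex of $B$ then keeps degree $\ge d$ and gains $i-2$ more, every new clique vertex has degree $\ge d$, and each of the $(i-2)|V(B)|$ attaching edges is a bridge of $\Gamma_i$. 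In any spanning tree $T$ of $\Gamma_i$ all these bridges lie in $T$; as before no $T$-path between two vertices of $B$ enters a pendant clique, so $T\cap E(B)$ is a spanning tree of $B$ and hence contains a vertex $w$ of degree exactly~$2$ in $B$. The edges of $\Gamma_i$ at $w$ are its $B$-edges together with its $i-2$ pendant-clique bridges, all of the latter in $T$, so $\deg_T(w)=2+(i-2)=i$; and $w\in V(B)$ is different from $r_i$. This gives the gadget for every $i\ge 3$.

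The substantive work — beyond invoking~\cite{Albertson} — is the bookkeeping needed to make the bridge-detour arguments airtight, together with one genuine technical point in the case $i=2$: there $\Gamma_2=B$ has no pendant cliques, so it must be attached to the rest of $G(k,d)$ at a vertex $r_2\in V(B)$, and if the forced degree-$2$ vertex of the relevant spanning tree of $B$ were $r_2$ itself it would acquire degree~$3$ in $G(k,d)$. I expect this to be harmless: it suffices that $r_2$ can be chosen so that every spanning tree of $B$ has a degree-$2$ vertex distinct from $r_2$ (for example, running the Albertson et al.\ construction with connectivity parameter much larger than required yields spanning trees with many forced degree-$2$ vertices, so no single vertex can be the unique one), and if one prefers, the same inflation trick lets one place $r_2$ inside an extra pendant clique attached to a carefully chosen vertex. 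Thus the only points calling for care are this endpoint issue and the verification that the bridges genuinely prevent the gadgets from interfering; everything else follows directly from the pendant-clique inflation and the existence of highly connected HIST-free graphs.
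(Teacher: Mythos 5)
Your overall architecture differs genuinely from the paper's. You build one gadget per target degree and string them together along a path of bridges; the paper instead proves the statement by induction on $k$, setting $G(1,d)=K_{d+1}$ and obtaining $G(k,d)$ from $G(k-1,d)$ by attaching a pendant copy of $K_{d+1}$ to \emph{every} vertex. The key point of the paper's construction is that the restriction $T'$ of a spanning tree $T$ to $V(G(k-1,d))$ is a spanning tree of $G(k-1,d)$ and $d_T(v)=d_{T'}(v)+1$ for \emph{all} such $v$, so the whole degree spectrum $\{1,\ldots,k-1\}$ shifts up uniformly to $\{2,\ldots,k\}$, and degree $1$ is free because $T$ is a tree. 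In particular the paper never needs the HIST-free graphs of Albertson et al.\ as a black box: the forced degree-$2$ vertex of $G(2,d)$ is just a leaf of a spanning tree of the base $K_{d+1}$, promoted by one pendant bridge. Your bridge-detour arguments (a tree path between two vertices of $B$ cannot cross a bridge into a pendant clique and return) are correct and are exactly the ones the paper uses, and your gadgets $\Gamma_i$ for $i\ge 3$ do work.

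The genuine gap is the case $i=2$ when $k\ge 3$. There you must attach $\Gamma_2=B$ to the rest of the graph at some $r_2\in V(B)$, and you need that every spanning tree of $B$ has a degree-$2$ vertex \emph{other than} $r_2$. The Albertson et al.\ result~\cite{Albertson} only guarantees \emph{some} vertex of degree $2$ in each spanning tree; neither of your proposed repairs closes this. The claim that high connectivity forces ``many'' degree-$2$ vertices in every spanning tree is not proved (and is not what their theorem says), and your second fix --- hiding $r_2$ in an extra pendant clique attached at a vertex $u$ --- merely relocates the problem, since $u$ itself gains a bridge edge and could be the unique degree-$2$ witness of some spanning tree of $B$. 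The repair is easy but amounts to abandoning the black box: replace $\Gamma_2$ by $K_{d+1}$ with one pendant $K_{d+1}$ attached to each vertex (so the forced leaf of the base clique becomes the degree-$2$ witness, and $r_2$ can live in a pendant clique), and more generally replace every $\Gamma_i$ by $K_{d+1}$ with $i-1$ pendant cliques per vertex. At that point your construction has essentially collapsed into the paper's, minus the tidier inductive bookkeeping. You should also say a word about $k=1$, which your assembly leaves empty.
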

\begin{proof}
The proof is by induction on $k$. Let a natural number $d$ be given. In the case $k=1$ we can choose $G(1,d)=K_{d+1}$, so we may assume $k \geq 2$. Let $G(k-1,d)$ be the graph given by the induction hypothesis and for each vertex $v \in V(G(k-1,d))$, let $K_v$ be a copy of $K_{d+1}$. Let $G(k,d)$ be obtained from the disjoint union of $G(k-1,d)$ and all the graphs $K_v$ for $v \in V(G(k-1,d))$ by adding an edge joining $v$ to one vertex in $K_v$ for each $v \in V(G(k-1,d))$. Clearly, $G(k,d)$ has minimum degree $d$. Let $T$ be a spanning tree of $G(k,d)$. Notice that the edges joining $v$ and $K_v$ are bridges and thus contained in $T$. Let $T'$ denote the subgraph of $T$ induced by $V(G(k-1,d))$. Clearly, $d_{T}(v) = d_{T'}(v)+1$ for every $v\in V(G(k-1, d))$ and $T'$ is a spanning tree of $G(k-1, d)$. By the induction hypothesis, $T'$ contains vertices of degree $1,\ldots,k-1$. Thus, $T$ has vertices of degree $2,\ldots,k$. Since $T$ is a tree, it also contains vertices of degree~1, so $G(k,d)$ is as desired.
\end{proof}

Theorem~\ref{thm:large_min_deg_small_degree_tree} shows that large minimum degree is not strong enough to avoid any specific degree in a spanning tree. Theorem~\ref{thm:noadjdegree2} shows that we can nevertheless obtain a spanning tree which is not too far away from being a HIST, in the sense that it can be obtained from a HIT by subdividing each edge at most once.

The strategy for proving Theorem \ref{thm:noadjdegree2} is to first apply Theorem~\ref{thm:bigdegreebigstar} to obtain a star-cover with stars of size at least 6. Given such a star-cover, we grow a tree~$T$ by starting with one of the stars and repeatedly adding stars together with some edges to $T$ (and possibly removing some edges), so that $T$ is always a tree without adjacent vertices of degree 2. To make sure that we do not create new vertices of degree 2 when we remove edges, we also require the vertices in $T$ which are adjacent to leaves with neighbours in $G-T$ to have degree at least~5 in $T$. We use this construction to prove the following lemma.

\begin{lemma} \label{lem:star}
Let $G$ be a connected triangle-free graph of minimum degree at least 3. If $G$ has a star-cover with stars of size at least 6, then $G$ has a spanning tree without adjacent vertices of degree 2.
\end{lemma}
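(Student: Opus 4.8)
The plan is to build the spanning tree greedily, processing the stars of the star-cover one at a time while maintaining a strong invariant that prevents the later removal of edges from ever creating adjacent degree-2 vertices. Start by picking one star $S_0$ from the star-cover and letting $T$ be $S_0$ together with, if $S_0$ is a single edge \emph{(actually a star of size at least 6 so its center has degree at least 6)}, nothing extra. At every stage $T$ will be a tree that (i) has no two adjacent vertices of degree 2, and (ii) satisfies the following \emph{safety condition}: every vertex $v\in V(T)$ that is adjacent in $T$ to a leaf of $T$ which has a neighbour in $G-T$ has $d_T(v)\ge 5$. The point of (ii) is that when we later attach new material at such a leaf and possibly delete the edge from $v$ to that leaf, the degree of $v$ drops by at most a bounded amount over the whole process and never reaches 2, and the leaf itself becomes an internal vertex so it is fine.

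Next, the induction step. As long as $V(T)\ne V(G)$, connectivity gives an edge of $G$ from $T$ to some vertex $u\notin V(T)$; let $S$ be the star of the cover containing $u$. I would split into cases according to whether $u$ is the center or a leaf of $S$, and according to the degree in $T$ of the attachment vertex $w\in V(T)$ that sends an edge to the component of $S$ we are grabbing. In the main case we add the whole star $S$ to $T$ via one connecting edge; since $S$ has at least 6 edges, its center gets degree at least $6$ in the new tree, the leaves of $S$ get degree 1 or 2, and two degree-2 vertices cannot be adjacent inside a star. The only danger is the connecting edge $wx$: if both $w$ and $x$ would have degree 2 we instead route the connection through a different vertex of $S$ (its center), or use triangle-freeness and minimum degree at least 3 to find an alternative edge from $T$ into $S$, or delete one edge at $w$ and replace it — this is exactly where the safety condition (ii) is used to guarantee $w$ had degree at least 5 so deleting one edge keeps it at degree at least 4, still not 2, and still safe. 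After attaching, re-establish (ii) for the newly exposed leaves: a leaf $\ell$ of $S$ that still has a neighbour in $G-T$ forces its unique $T$-neighbour (the center of $S$, of degree at least $6$) to satisfy $d_T\ge5$ automatically, and any old leaf of $T$ that we used up is no longer a leaf.

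The termination is clear since each step adds at least the vertices of one full star of the cover, so after finitely many steps $V(T)=V(G)$ and $T$ is the desired spanning tree. I would organize the bookkeeping by always keeping $T$ connected and spanning a growing vertex set, and by recording for each "active" leaf which vertex is responsible for it; triangle-freeness is used to ensure that when we look for an alternative connecting edge into a star, the two endpoints we want to avoid making both degree 2 do not have all their options blocked by a triangle forcing them to coincide with already-used neighbours, and minimum degree at least 3 guarantees enough edges leave each leaf to perform the rerouting.

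The main obstacle I expect is the case analysis around the connecting edge: ensuring that attaching a new star never simultaneously makes the two endpoints of the bridge into adjacent degree-2 vertices, while also not destroying the safety condition (ii) at vertices of $T$ that were previously fine. The delicate point is that deleting an edge to reroute can lower the degree of some vertex $w$, and one must have pre-guaranteed $d_T(w)\ge5$ so that after at most a couple of such deletions over the lifetime of $w$ it never dips to $2$; this is precisely why the invariant bundles (i) and (ii) together and why the star sizes are taken to be at least $6$ rather than at least $3$. Verifying that (ii) is genuinely restored after every step — in particular for the new leaves coming from the attached star and for old leaves of $T$ whose status changed — is the bulk of the work.
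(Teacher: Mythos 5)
Your setup---growing a tree while maintaining (i) no adjacent degree-2 vertices and (ii) the safety condition that the $T$-neighbour of any leaf of $T$ having a neighbour outside $T$ has degree at least $5$---is essentially the paper's framework (its conditions (1)--(3) on a maximal tree). The gap lies precisely in the step you defer as ``the bulk of the work.'' The genuinely hard situation is the stuck configuration, which the paper isolates via its Claims 1 and 2: every vertex of $T$ with a neighbour outside is a \emph{leaf} of $T$, it has exactly one outside neighbour $u$, and $u$ is a leaf of its star with no further neighbours in $G-T$ (triangle-freeness is used exactly here, not in the way you describe). In that configuration, attaching a star through a single connecting edge creates two adjacent degree-2 vertices, ``routing through the centre'' is unavailable (the leaf of $T$ has only the one outside neighbour $u$), and attaching through two edges $uv_{1},uv_{2}$ closes a cycle, forcing the deletion of a tree edge at the $T$-neighbour $w$ of one of the leaves $v_{1},v_{2}$. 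Your plan is to absorb such deletions with the degree-$5$ guarantee, asserting that any fixed $w$ suffers only ``a couple'' of deletions over its lifetime. That is not justified: a vertex $w$ of degree $d\geq 5$ in $T$ may have up to $d-1$ leaf neighbours, each attached to its own outside star, and processing stars one at a time gives no control over how many of those attachments charge their deletion to the same $w$; with unspecified (or unlucky) choices $w$ can lose far more than two edges, destroying both invariants, and no local rule is given that prevents this.

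The paper closes exactly this gap with a global, simultaneous augmentation instead of a one-star-at-a-time step: it builds an auxiliary multigraph $H$ whose vertices are the relevant vertices $w$ and which has one edge per boundary star, joining the two vertices $w_{i,1},w_{i,2}$ that could be charged; it fixes an almost balanced orientation of $H$ (via an Euler walk), attaches \emph{all} boundary stars at once through the two edges $u_iv_{i,1},u_iv_{i,2}$, and deletes the tree edge only at the head $w_{i,2}$ of each oriented edge. The orientation guarantees that each $w$ loses at most $\lceil d_H(w)/2\rceil\leq\lceil (d_T(w)-1)/2\rceil$ edges, so its degree stays at least $\lceil d_T(w)/2\rceil\geq 3$, and the safety condition is restored because every leaf of the new tree with outside neighbours lies in a newly added star, whose centre has degree at least $6$. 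Without this balancing mechanism (or some equivalent way of distributing the deletions), your induction step cannot be completed, so as written the proposal has a genuine gap.
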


\begin{proof}
Let $\{S_1,...,S_m\}$ be a star-cover of $G$ with stars of size at least 6 and let~$T$ be a tree in $G$ of maximal size with respect to the following conditions:
\begin{itemize}
    \item[(1)] $T$ contains no two consecutive vertices of degree 2, 
    \item[(2)] $V(G-T) = \bigcup_{i \in I}V(S_i)$ for some $I \subseteq \{1,...,m\}$, and
    \item[(3)] If $v \in V(T)$ is a leaf in $T$ that is joined to a vertex $u \in V(G-T)$ in $G$, then the neighbour of $v$ in $T$ has degree at least 5 in $T$. 
\end{itemize}
Clearly $T$ exists since $S_1$ satisfies the above properties. We can assume that $V(G-T)$ is non-empty since otherwise $T$ would be our desired spanning tree.\\
\par
\emph{\textbf{Claim 1: } If $v \in V(T)$ has a neighbour $u \in V(G-T)$ in $G$, then $v$ has degree 1 in $T$ and $u$ has degree 1 in $G-T$.}
\begin{proof}[Proof of Claim 1]
Let $v \in V(T)$ be adjacent to $u \in V(G-T)$ in $G$. By property (2), $u$ is contained in some star $S_j$ in $G-T$. If $v$ has degree greater than 1 in~$T$ or if $u$ has degree greater than 1 in $S_j$, then let $T'$ be the graph obtained from $T$ by adding the edge $uv$ and the star $S_j$. Clearly $T'$ is a tree satisfying (2) and since $u$ or $v$ had degree greater than 1, it contains no adjacent vertices of degree 2. The leaves of $T'$ are either leaves of $T$ or leaves of $S_j$ and since the centre of $S_j$ has degree at least 6 and all edges of $S_j$ are in $T'$, condition (3) is still satisfied. Thus, $T'$ satisfies all three conditions, contradicting our choice of $T$.\\ So we may assume that $v$ has degree $1$ in $T$ and $u$ has degree 1 in $S_j$. If $u$ has degree greater than 1 in $G-T$ but degree 1 in $S_j$, then, since $G$ is triangle-free, $u$ is adjacent to a vertex $w$ contained in some star $S_k$ in $G-T$ with $k\neq j$.
Now the graph obtained from $T$ by adding the stars $S_j, S_k$ and the edges $uv, uw$ contradicts the maximality of~$T$. 
\end{proof}
\par
\emph{\textbf{Claim 2: } In $G$ any vertex $v \in V(T)$ has at most one neighbour in $V(G-T)$.}
\begin{proof}[Proof of Claim 2]
Suppose $v \in V(T)$ has two neighbours $u,u' \in V(G-T)$. By property~(2) and Claim 1, $u$ and $u'$ are leaves in some stars $S_{j}, S_{j'}$ in $G-T$. If $S_{j} \neq S_{j'}$, then the tree obtained from the disjoint union of $T, S_{j}, S_{j'}$ by adding the edges $vu$ and $vu'$ contradicts the maximality of $T$. Hence $S_{j} = S_{j'}$. Let $w$ be the center of $S_j$. Let $T'$ be the tree obtained from the disjoint union of $T$ and $S_{j}$ by adding the edges $vu, vu'$ and removing the edge $u'w$. Clearly $T'$ satisfies conditions (1) and (2). Notice that the neighbour of the leaf $u'$ in $T'$ only has degree 3. Nevertheless, $T'$ also satisfies condition~(3) since $u'$ had degree 1 in $G-T$ and is thus not adjacent to any vertex in $G-T'$. Therefore $T'$ contradicts the maximality of $T$.
\end{proof}

Notice that Claim 2 implies that $T$ is not a star, since the minimum degree of $G$ is~3 and no two leaves of a star can be adjacent because $G$ is triangle-free.

Let $\mathcal{S}$ be the set of stars in $\{S_1,\ldots ,S_m\}$ which are disjoint from $T$ and have neighbours in $T$. Note that $\mathcal{S}$ is non-empty since $G$ is connected and $T$ is not a spanning tree. By Claim~1 and since $G$ has minimum degree 3, any star $S_i \in \mathcal{S}$ has a leaf $u_i$ which has at least two distinct neighbours $v_{i,1}, v_{i,2} \in V(T)$. For each star $S_i\in \mathcal{S}$ we now pick such vertices $u_i, v_{i,1}, v_{i,2}$. Let $G'$ be the graph obtained from $G$ by removing all the edges between $T$ and $G-T$ except the edges of type $v_{i,j}u_i$ where $j \in \{1,2\}$ and $S_i \in \mathcal{S}$. Furthermore, let $w_{i,j}$ denote the neighbour of $v_{i,j}$ in $T$ for $j\in \{1,2\}$, see Figure~\ref{fig:proof_star}.  

We now define an auxiliary graph $H$ which might have multiple edges and loops. The vertex set of $H$ is the subset of $V(T)$ consisting of all vertices $w_{i,j}$. Notice that it can happen that $w_{i,j} = w_{i',j'}$ for $(i,j)\neq (i',j')$, in which case the vertex $w_{i,j}$ is only included once in $V(H)$. The edge set of $H$ corresponds to the stars in $\mathcal{S}$: For each $S_i\in \mathcal{S}$, we have an edge $e_i$ between the vertices $w_{i,1}$ and $w_{i,2}$. We allow parallel edges, so $e_i\neq e_{i'}$ for $i\neq i'$. If $w_{i,1} = w_{i,2}$, then the edge $e_i$ is a loop at the vertex $w_{i,1}$.

We fix an almost balanced orientation of $H$, that is, an orientation such that the in-degree of a vertex of even degree equals the out-degree, and the in- and out-degrees of vertices of odd degree differ by 1. (To see that such an orientation exists, add a vertex joined to all vertices of odd degree and orient the edges consistently along an Euler walk.) We can assume, by relabelling the vertices $v_{i,j}$ and $w_{i,j}$ if needed, that every edge is oriented from $w_{i,1}$ to $w_{i,2}$. We construct a tree $T'$ from the disjoint union of $T$ and the stars in $\mathcal{S}$ by adding all the edges $u_iv_{i,2}$ and $u_iv_{i,1}$ and removing all the edges $w_{i,2}v_{i,2}$. Notice that since all vertices in $T$ have a non-leaf neighbour we have $d^-_H(v) \leq \lceil d_H(v)/2\rceil \leq \lceil (d_T(v)-1)/2\rceil$ for every $v \in V(H)$, where $d^-_H(v)$ denotes the in-degree of $v$ in $H$. Thus, 
$$d_{T'}(v) = d_T(v)-d^-_H(v) \geq d_T(v) - \left\lceil \frac{d_T(v)-1}{2}\right\rceil = \left\lceil \frac{d_T(v)}{2} \right\rceil \geq 3$$ 
for every $v\in V(H)$. Therefore, the set of vertices of degree 2 in $T'$ consists of the vertices of degree 2 in $T$ together with the vertices $v_{i,1}$. In particular, condition (1) is satisfied. Clearly $T'$ also satisfies condition (2). To see that~$T'$ also satisfies (3), note that $T'$ contains all stars in $\mathcal{S}$. Hence, the vertices of degree 1 in $T'$ with neighbours in $G-T'$ are contained in some stars in $\mathcal{S}$ and thus their neighbour in $T'$ has degree at least 6. This implies that $T'$ satisfies all three conditions and therefore contradicts the maximality of $T$. 
\end{proof}
\begin{figure}
    \centering
    \includegraphics{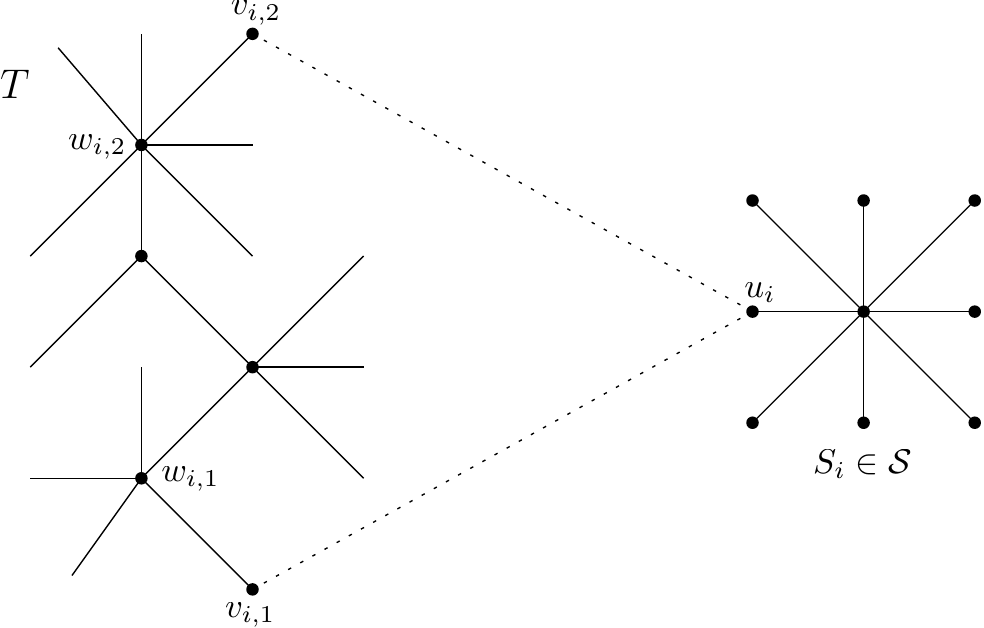}
    \caption{Definition of $u_i, v_{i,1}, v_{i,2}, w_{i,1}, w_{i,2}$ in Lemma \ref{lem:star}.}
    \label{fig:proof_star}
\end{figure}

Given Theorem \ref{thm:bigdegreebigstar} and Lemma \ref{lem:star}, we can easily prove Theorem~\ref{thm:noadjdegree2}.
\begin{proof}[Proof of Theorem \ref{thm:noadjdegree2}]
Let $G$ be a graph of minimum degree at least $2 f(6)$, where $f$ is the function defined by Theorem \ref{thm:bigdegreebigstar}. Let $H$ be a bipartite subgraph of $G$ of maximal size. Clearly $H$ is spanning and connected. Notice that $d_H(v)\geq \frac{1}{2}d_G(v)$ for every $v\in V(G)$ since otherwise we could move $v$ to the other bipartite class of $H$ to obtain a bipartite subgraph of $G$ of larger size. In particular, $H$ has minimum degree at least~$f(6)$. By Theorem \ref{thm:bigdegreebigstar}, $H$ has a star-cover where each star has size at least 6. Since $H$ is triangle-free, by Lemma \ref{lem:star}, the graph $H$ has a spanning tree without adjacent vertices of degree 2.
\end{proof}

\section{Spanning trees without 3 consecutive vertices of degree 2}

 The main theorem of this section is Theorem \ref{thm:no3inarow} below which immediately implies Theorem \ref{thm:min_degree_3_no_3_in_a_row}.

\begin{theorem} \label{thm:no3inarow}
Every simple connected graph $G$ has a spanning tree $T$, such that there is no path of length 2 in $T$ all of whose vertices have degree 2 in $T$ and degree at least 3 in $G$.
\end{theorem}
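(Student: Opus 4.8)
The plan is to argue by contradiction, taking a counterexample $G$ and a spanning tree $T$ of $G$ that is extremal with respect to a carefully chosen objective, and then to derive structural information that lets us perform a local modification contradicting extremality. The natural quantity to minimize is the number of \emph{bad paths} in $T$, where a bad path is a path $x y z$ of length $2$ all of whose internal and endpoint vertices have degree $2$ in $T$ and degree at least $3$ in $G$; among all spanning trees minimizing this, pick one that also, say, minimizes the total number of degree-$2$ vertices of $T$, or lexicographically optimizes some further secondary parameter. Assuming $T$ still contains a bad path $P = v_1 v_2 \cdots v_k$ (a maximal run of degree-$2$ vertices, $k \geq 3$), I would focus on an interior vertex $v_i$ of $P$. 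Since $d_G(v_i) \geq 3$, there is an edge $v_i w$ of $G$ not in $T$. Adding $v_i w$ to $T$ creates a unique cycle $C$ containing $v_i w$ and a path in $T$ between $v_i$ and $w$; removing a suitable tree-edge on $C$ — ideally one incident to a degree-$2$ vertex on the run $P$ — yields a new spanning tree $T'$ in which $v_i$ has degree $3$ and one of its neighbours on $P$ has dropped to degree $1$ (a leaf), thereby destroying the bad path through $v_i$ without creating new bad paths elsewhere.

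The key steps, in order, would be: (i) set up the extremal spanning tree $T$ and define bad paths precisely; (ii) show that if $T$ has a bad path, then each interior vertex $v_i$ of a maximal degree-$2$ run has a non-tree neighbour $w$, and analyse where $w$ sits relative to the run $P$ — either $w \notin V(P)$, or $w$ is another vertex of $P$; (iii) in the first case, trace the fundamental cycle $C$ of $v_i w$ and pick an edge of $T$ to delete so that the net change in the count of bad paths is negative — the safe choice is to delete the edge $v_{i} v_{i+1}$ (or $v_{i-1} v_i$), since then $v_i$ gains degree to $3$, the edge-count of the run is cut, and the only vertices whose $T$-degree changes besides $v_i$ lie on $C$, each losing at most one in degree; (iv) handle the degenerate subcase where $w \in V(P)$, i.e.\ the non-tree edge is a chord of the run, where adding the chord and deleting an interior run-edge splits the long run into two short pieces and again removes a bad path; (v) carefully verify that no previously "good" path of degree-$2$ vertices becomes bad — this uses that the deleted edge is incident to the run $P$, so the only candidates for new degree-$2$ vertices are the former neighbours of that edge's endpoints, and the extremal choice of $T$ (secondary minimization) rules these out.

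The main obstacle I anticipate is step (v): controlling the \emph{global} effect of the edge swap. Deleting a tree-edge $e$ on the fundamental cycle $C$ decreases the degree of both endpoints of $e$ by one, and if one of those endpoints had degree $3$ in $T$ it drops to degree $2$, potentially spawning a fresh bad path far from $P$. The art is to choose the swap edge — and to set up the extremal hypothesis — so that this cannot happen: choosing $e$ incident to $v_i$ itself (an endpoint of the run) means the other endpoint of $e$ is a degree-$2$ vertex of the run that becomes a \emph{leaf}, which is harmless, so the only worry is whether deleting $e$ disconnects a short good run into something worse, which it does not since a leaf cannot be an interior vertex of a path. Making all of this airtight — in particular ruling out pathological small configurations and the interaction of several bad paths sharing vertices of high $T$-degree — is, I expect, exactly why the authors split the argument into a main section and a separate section for a technical lemma. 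A subtlety worth flagging is the hypothesis "degree at least $3$ in $G$": vertices of degree $\leq 2$ in $G$ are forced to have low degree in any spanning tree and are simply excluded from the definition of bad path, so all local moves must avoid relying on non-tree edges at such vertices, which is automatic since $d_G(v_i) \geq 3$ is precisely what guarantees the non-tree edge $v_i w$ exists.
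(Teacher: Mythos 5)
Your plan is a local exchange argument on an extremal spanning tree, and it has a genuine gap precisely at the point you flag as step (v); nothing in the sketch indicates how to close it. The swap $T'=T+v_iw-e$ changes degrees in three places, not one. Even if you arrange that the endpoint of the deleted edge which loses a degree is a degree-2 vertex of the run (so it becomes a harmless leaf), the vertex $w$ gains a degree: if $w$ was a leaf of $T$ with $d_G(w)\geq 3$, it becomes a degree-2 vertex and can complete a brand-new bad path $w u u'$ through its old tree neighbourhood; and if instead you delete a cycle edge outside the run, its second endpoint can drop from degree 3 to degree 2 and create a bad path arbitrarily far from $P$. So the primary quantity (number of bad paths) need not strictly decrease, and the proposed secondary quantity (number of degree-2 vertices) cannot repair a lexicographic argument whose first coordinate may go up. There is also an internal inconsistency in step (iii): if the deleted edge is incident to $v_i$, then $v_i$ trades one tree edge for $v_iw$ and stays at degree 2, so you cannot simultaneously have ``$v_i$ has degree 3'' and ``delete $v_iv_{i+1}$''; to raise $v_i$ to degree 3 you must delete a non-incident edge, which reopens exactly the global side-effects above. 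Absent a concrete potential that provably decreases under some available swap in every configuration, the argument does not terminate, and no such potential is offered.

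Your guess about the role of the paper's technical section is also off: the paper does not make a tree-exchange airtight, it argues by induction on the graph. A minimal counterexample is shown to have minimum degree at least 3 (Lemma~\ref{lem:mincounter}, itself a small local-swap argument, but only for deleting degree-1 and degree-2 vertices), and then the structural Lemma~\ref{lem:main} (proved via the Thomassen--Toft theory of non-separating induced cycles) produces either an induced non-separating cycle avoiding all vertices of degree at least 4, or an induced non-separating path joining two such vertices, or a $W_a$-/$W_{a,b}$-configuration. In each case one deletes $E(C)$, $E(P)$, or the configuration, takes a good spanning tree of the smaller graph by minimality, and extends it; the deleted structure contributes no new bad path because its internal vertices either keep degree at least 3 in $G$ but lie on the removed edges, or are handled explicitly in the configuration case. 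To rescue your approach you would need either a strictly decreasing potential robust to the side-effects at $w$ and at the deleted edge's far endpoint, or a case analysis showing some swap always avoids creating new bad paths; as written, the proposal does not establish the theorem.
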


This stronger version of Theorem \ref{thm:min_degree_3_no_3_in_a_row} allows us to use induction on the size of~$G$. To simplify the notation in the following proofs, we introduce the following definition.

\begin{definition}
Let $H$ be a subgraph of $G$. We say a path of length 2 in $H$  is \emph{$G$-bad} if all its vertices have degree at least 3 in $G$ and degree 2 in $H$. We say a subgraph $H$ is \emph{$G$-bad} if it contains a $G$-bad path, otherwise we call it \emph{$G$-good}.
\end{definition}

Now the statement of Theorem~\ref{thm:no3inarow} is simply that every simple connected graph $G$ has a $G$-good spanning tree. First we show that to prove this statement, it is sufficient to consider graphs of minimum degree at least 3.

\begin{lemma} \label{lem:mincounter}
A minimal counterexample to Theorem \ref{thm:no3inarow} has minimum degree at least~3.
\end{lemma}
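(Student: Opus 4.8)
The plan is to take a minimal counterexample $G$ to Theorem~\ref{thm:no3inarow} and show that no vertex of degree $1$ or $2$ can occur, by exhibiting, in each case, a strictly smaller graph $G'$ whose $G'$-good spanning tree can be lifted back to a $G$-good spanning tree of $G$. Since $G$ is a minimal counterexample, $G$ is connected and every connected graph with fewer edges (or fewer vertices) admits a good spanning tree; the goal is to derive a contradiction from the existence of a low-degree vertex. The main work is bookkeeping about how degrees in the auxiliary tree compare to degrees in $G$, since ``$G$-bad'' is defined relative to degree at least $3$ in the ambient graph.

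First I would handle degree-$1$ vertices. Suppose $v$ has degree $1$ in $G$ with unique neighbour $u$. Let $G' = G - v$; it is connected and smaller, so it has a $G'$-good spanning tree $T'$. Set $T = T' + uv$. Then $T$ is a spanning tree of $G$, and I must check it is $G$-good. The only vertices whose $T$-degree differs from their $T'$-degree is $u$, whose degree goes up by one, and $v$, which is a leaf of $T$ and hence never the middle or an endpoint of a bad path (a bad path needs all three vertices of degree exactly $2$ in $T$). Increasing $d_T(u)$ can only destroy bad paths through $u$, not create them; and a bad path of $T$ avoiding $u$ would already be a bad path of $T'$ in $G'$ — here one must note that degrees in $G$ and $G'$ agree for all vertices except $v$, and that having degree $\ge 3$ in $G'$ implies degree $\ge 3$ in $G$. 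Hence $T$ is $G$-good, a contradiction. The subtle point to get right is that a vertex might have degree $2$ in $G'$ but degree $\ge 3$ in $G$ — but that direction is harmless, since it only enlarges the set of vertices that ``count'' when we check badness in the smaller graph, making $T'$ being $G'$-good a weaker hypothesis than we'd like; so actually I should phrase the induction so that $G$-goodness is tested with respect to $G$ throughout, or simply observe $v$ is the only vertex affected and it is a leaf, so no vertex changes its ``degree $\ge 3$'' status in a way that matters. This is the step I expect to require the most care in writing.

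Next, degree-$2$ vertices. Let $v$ have degree $2$ in $G$ with neighbours $u_1, u_2$. Here the natural move is to suppress $v$: let $G'$ be obtained from $G - v$ by adding the edge $u_1 u_2$ (if it is not already present; if it is, or if $u_1 = u_2$ is impossible since $G$ is simple, one argues slightly differently, possibly just deleting $v$ and adding $u_1u_2$ only when needed to keep connectivity). Then $G'$ is connected and smaller, so it has a $G'$-good spanning tree $T'$. If $u_1u_2 \in T'$, replace that edge by the path $u_1 v u_2$ to get a spanning tree $T$ of $G$; if $u_1u_2 \notin T'$, then $T'$ is (almost) a spanning tree of $G-v$ and we attach $v$ by one of its two edges. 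In the first case $v$ becomes a degree-$2$ vertex of $T$ sitting between $u_1$ and $u_2$, and $d_T(u_i) = d_{T'}(u_i)$, so the only possible new bad path is one using $v$ as an interior or endpoint vertex; for that we would need $u_1$ or $u_2$ to have degree $2$ in $T$ and $\ge 3$ in $G$, and then the corresponding configuration in $T'$ (with $u_1u_2$ contracted back) would witness $G'$-badness unless the auxiliary edge creation changed a relevant degree — precisely the place where adding $u_1u_2$ to $G$ matters, since it can raise $d_{G'}(u_i)$ above that of $d_G(u_i)$. The key observation to make this work is that $v$ itself has degree $2$ in $G$, so $v$ never contributes to a $G$-bad path (its $G$-degree is not $\ge 3$); thus a bad path of $T$ through $v$ can only have $v$ as an interior vertex being irrelevant, i.e. there is no bad path through $v$ at all. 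This is cleaner than the degree-$1$ case and I expect it to go through with a short argument once the definitions are unwound.

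The main obstacle, then, is not any single clever construction but rather ensuring the ``degree $\ge 3$ in the ambient graph'' clause behaves correctly under vertex deletion and edge addition: one must verify in every case that passing from $G$ to $G'$ does not turn a $G'$-good tree into a $G$-bad one, which amounts to checking that every vertex that is ``active'' (degree $\ge 3$ in $G$) and has $T$-degree $2$ also had these properties recorded correctly in $G'$. I would organize the proof as a short case analysis (degree $1$, then degree $2$) with a uniform lemma-style observation up front: \emph{if $T'$ is a $G'$-good spanning tree of a graph $G'$ obtained from $G$ by deleting a vertex of degree $\le 2$ and possibly adding one edge between its neighbours, and $T$ is the natural lift of $T'$, then $T$ is a $G$-good spanning tree of $G$}, reducing everything to that observation and contradicting minimality, thereby forcing minimum degree at least $3$.
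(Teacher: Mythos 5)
There is a genuine gap, and it is concentrated exactly where you flag the least concern. In the degree-$1$ case you assert that increasing $d_T(u)$ when attaching $v$ to its neighbour $u$ ``can only destroy bad paths through $u$, not create them.'' That is false when $d_{T'}(u)=1$: adding $uv$ raises $u$'s tree-degree from $1$ to $2$, and if $d_G(u)\ge 3$ and $u$ is followed in the tree by two vertices of tree-degree $2$ and $G$-degree at least $3$, then a new $G$-bad path with endvertex $u$ appears that was not $G'$-bad in $T'$ (there $u$ was a leaf). This is precisely why the paper does not stop at $T_1=T'+uv$: it performs an edge exchange $T_2=T_1-xy+xu$ using a third neighbour $u$ of the endvertex $x$, and if $T_2$ is still bad it performs a second exchange $T_3=T_2-uw+xy$, and only then gets a $G$-good tree. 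The same defect recurs in your degree-$2$ case in the subcase $u_1u_2\notin E(T')$, where you attach $v$ by a single edge: again the attachment vertex can jump from tree-degree $1$ to $2$ and become the endvertex of a new bad path, and the paper needs the analogous two-step repair ($T_2,T_3,T_4$ in its Claim 2). Consequently your proposed uniform observation (``the natural lift of a $G'$-good tree is $G$-good'') is false as stated, and the proof cannot be reduced to it.

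The second gap is the triangle case, which you dismiss parenthetically (``if $u_1u_2$ is already present \dots one argues slightly differently''). This is in fact the bulk of the paper's argument for excluding degree-$2$ vertices. If $u_1u_2\in E(G)$ you cannot suppress $v$, and simply deleting $v$ changes the ambient degrees of $u_1,u_2$: a vertex of degree $3$ in $G$ may have degree $2$ in $G'$, so a $G'$-good tree need not lift to a $G$-good one, and reattaching $v$ by one edge has the degree-$1$ problem above. The paper first reduces to the situation where both triangle neighbours $x,y$ have degree exactly $3$ in $G$, then splits according to whether $G-v-xy$ is connected: if it is, a good tree of it is repaired by one exchange; if not, it passes to $G''=G-v-x-y+x'y'$ (joining the outside neighbours $x',y'$) and lifts a $G''$-good tree with a short case distinction on the degrees of $x',y'$ in the tree. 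So your overall plan (delete or suppress the low-degree vertex and lift a good tree) is the right opening move and matches the paper's, but without the explicit repair exchanges and the triangle analysis the argument does not go through.
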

\begin{proof}
Let $G$ be a connected simple graph which has no $G$-good spanning tree and for which $|V(G)|$ is minimal. Clearly $|V(G)| \geq 4$.\\

\emph{\textbf{Claim 1: }$G$ has no vertices of degree 1.}

\begin{proof}[Proof of Claim 1]
Suppose $v \in V(G)$ has degree 1 and let $G'=G-v$. By minimality of $G$, we can find a spanning tree $T'$ in $G'$ which is $G'$-good. Let $T_1$ denote the graph we get by adding $v$ and its incident edge to $T'$. Clearly $T_1$ is a spanning tree of $G$. The only way how $T_1$ could be $G$-bad is that $v$ is adjacent to an endvertex $x$ of a $G$-bad path in $T_1$, say $xyz$. In this case, let $u$ denote a neighbour of $x$ different from $v$ and $y$. Now consider the graph $T_2=T_1-xy+xu$, which is another spanning tree of $G$. If $T_2$ is $G$-bad, then there must be a $G$-bad path $xuw$ in $T_2$, see Figure~\ref{fig:lem_3_3_claim_1}. In particular, the vertex $w$ has degree 2 in $T_2$. Finally, set $T_3=T_2-uw+xy$. It is easy to see that $T_3$ is a $G$-good spanning tree of $G$. \end{proof}
\par
\emph{\textbf{Claim 2: }$G$ has no vertices of degree 2.}

\begin{figure}[t]
    \centering
    \includegraphics[width=\textwidth]{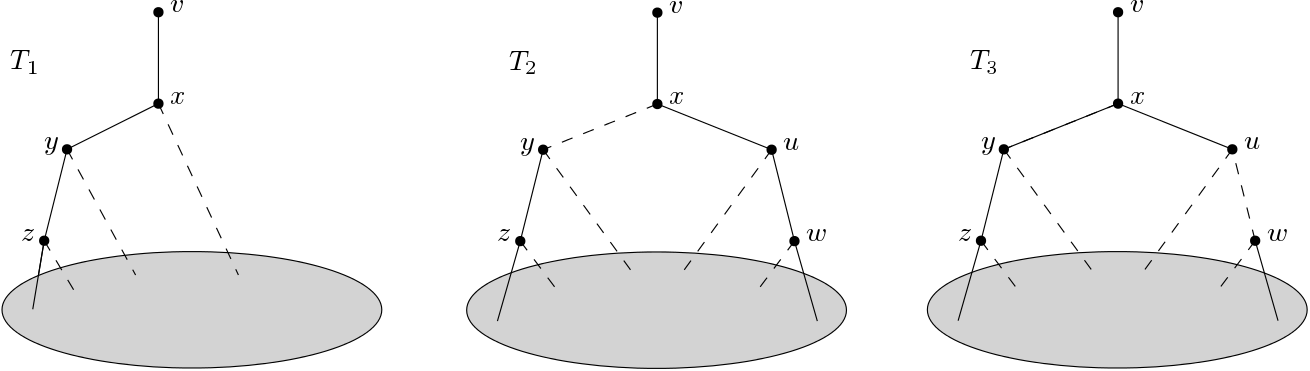}
    \caption{Proof of Claim 1 in Lemma \ref{lem:mincounter}}
    \label{fig:lem_3_3_claim_1}
\end{figure}

\begin{figure}[h]
    \centering
    \includegraphics[width=\textwidth]{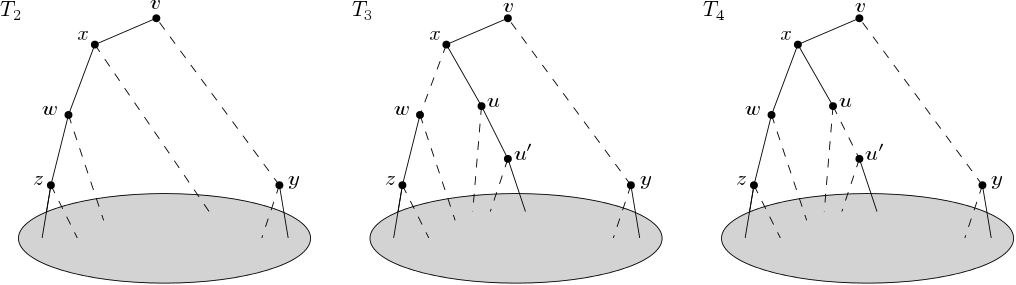}
    \caption{Proof of Claim 2 where $d(v)=2$ and $xy\notin E(G)$}
    \label{fig:lem_3_3_claim_2_1}
\end{figure}

\begin{proof}[Proof of Claim 2]
Suppose $v \in V(G)$ has exactly two neighbours $x, y$ and that $x$ and $y$ are non-adjacent. By minimality of $G$, the graph $G'=G-v+xy$ has a $G'$-good spanning tree $T'$. If $xy \in E(T')$, then $T_1=T'-xy+xv+yv$ is a $G$-good spanning tree, so we can assume that $xy \notin E(T')$. In this case we can assume that  $T_2=T'+xv$ is a $G$-bad spanning tree. Hence, $x$ is an endvertex in a $G$-bad path $xwz$ in $T_2$. Let $u$ be a neighbour of $x$ in $G$ different from $v$ and $w$. We can assume that $T_3=T_2-xw+xu$ contains a $G$-bad path $xuu'$. Notice that $u$ and $u'$ have degree 2 in $T_3$. Now $T_4=T_3-uu'+xw$ is a $G$-good spanning tree, see Figure~\ref{fig:lem_3_3_claim_2_1}. Thus we may assume that every vertex of degree~2 is contained in a triangle in $G$.

\begin{figure}
    \centering
    \includegraphics[width=\textwidth]{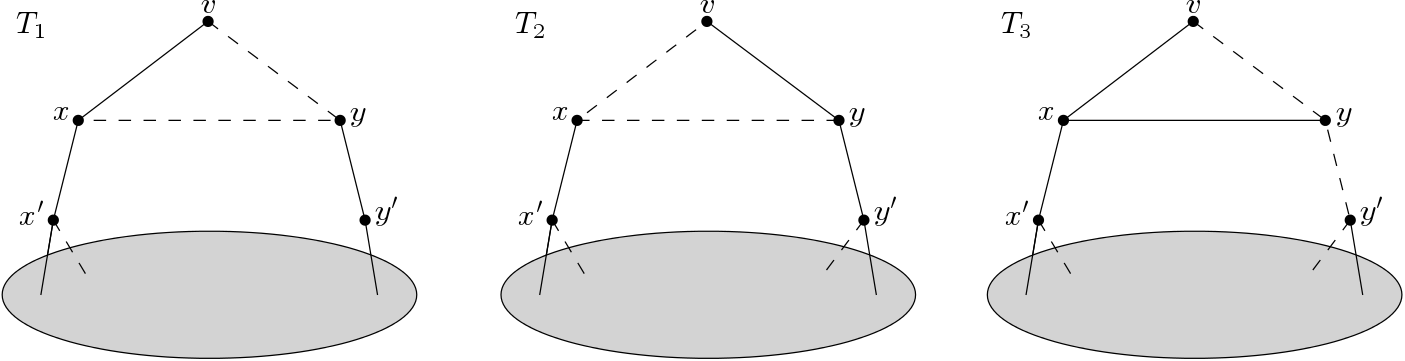}
    \caption{Proof of Claim 2 where $xy \in E(G)$ and $G'=G-v-xy$ is connected}
    \label{fig:lem_3_3_claim_2_2}
\end{figure}

If one of $x$ and $y$, say $x$, does not have degree 3 in $G$, then any $(G-vx)$-good spanning tree of $G-vx$ is also a $G$-good spanning tree, so by the minimality of $G$ we can assume that both $x$ and $y$ have degree 3 in $G$. Let $x'$ and $y'$ denote the neighbours of $x$ and $y$ which are different from $x$, $y$ and $v$. If $G'=G-v-xy$ is connected, then let $T'$ be a $G'$-good spanning tree of $G'$. If both  $T_1=T'+vx$ and $T_2=T'+vy$ are $G$-bad, then $x'$ and $y'$ have degree 2 in $T'$ and $T_3=T_1+xy-yy'$ is a $G$-good spanning tree, see Figure~\ref{fig:lem_3_3_claim_2_2}. Thus we may assume that $G'$ is disconnected. In particular $x' \neq y'$ and both $x'$ and $y'$ have degree at least 3 since $xx'$ and $yy'$ are not contained in triangles.

\begin{figure}
    \centering
    \includegraphics[scale=.45]{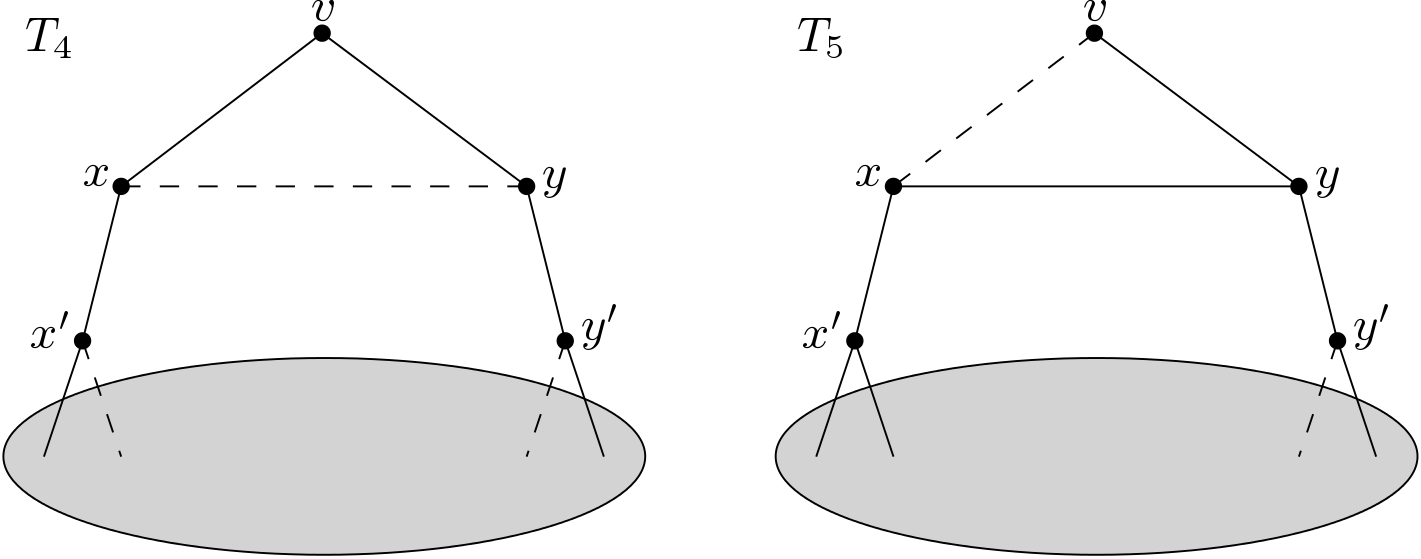}
    \caption{Proof of Claim 2 where $xy \in E(G)$ and $G'=G-v-xy$ is disconnected}
    \label{fig:lem_3_3_claim_2_3}
\end{figure}

Let $G''=G-v-x-y+x'y'$ and let $T''$ be a $G''$-good spanning tree of $G''$. Since~$G'$ is disconnected we have that $x'y' \in E(T'')$. If both $x', y'$ have degree 2 in $T''$, then $T_4=T''-x'y'+x'x+xv+vy+yy'$ is a $G$-good spanning tree of $G$. So one of $x',y'$ does not have degree 2 in $T''$, say $x'$. Now $T_5=T''-x'y'+x'x+xy+yv+yy'$ is a $G$-good spanning tree, see Figure~\ref{fig:lem_3_3_claim_2_3}.
\end{proof}
Claims 1 and 2 immediately imply that $G$ has minimum degree 3.
\end{proof}

Thomassen and Toft~\cite{Thomassen} proved that every connected graph $G$ with minimum degree~3 contains an induced cycle $C$ such that $G-V(C)$ is connected. Let $G$ be a minimal counterexample to Theorem~\ref{thm:no3inarow}. By Lemma~\ref{lem:mincounter}, $G$ has minimum degree at least 3, so there exists an induced cycle $C$ for which $G-V(C)$ is connected. In particular, also $G'= G-E(C)$ is connected. Note that if $C$ does not contain any vertices of degree 4 in $G$, then any $G'$-good spanning tree of $G'$ is also a $G$-good spanning tree of $G$, contradicting our choice of $G$. In particular, every non-separating induced cycle of $G$ contains a vertex of degree 4. This already shows that Theorem~\ref{thm:no3inarow} is true for subcubic graphs and Theorem~\ref{thm:min_degree_3_no_3_in_a_row} for cubic graphs. 

To prove Theorem~\ref{thm:no3inarow} in its full generality, we show that in a graph of minimum degree at least 3 we can always find an induced non-separating subgraph~$H$ with the property that we can extend every $(G-H)$-good spanning tree of $G-H$ to a $G$-good spanning tree of $G$. Two of the reducible structures we use are so-called $W_a$- and $W_{a,b}$-configurations which are defined as follows, see Figure \ref{fig:W_a-conf} and \ref{fig:W_a,b-conf}. 

\begin{definition}[$W_a$-configuration]
A $W_a$-configuration in $G$ is an induced subgraph $H$ consisting of a path $P=v_1\cdots v_a$ and three distinct vertices $v,x,y$ not contained in $P$, such that $v$ is adjacent to all vertices in $V(P) \cup \{x,y\}$, $xv_1, yv_a \in E(H)$, and every vertex in $V(H)\setminus \{v\}$ has degree 3 in $G$. Moreover, $G-H$ is connected, both $x$ and $y$ have precisely one neighbour in $G-H$ and no other vertex of $H$ has a neighbour in $G-H$. We call $v$ the \emph{centre} and $x,y$ the \emph{connectors} of the $W_a$-configuration.
\end{definition}

\begin{definition}[$W_{a,b}$-configuration]
A $W_{a,b}$-configuration in $G$ is an induced subgraph $H$ consisting of two disjoint paths $P=v_1\cdots v_a$, $Q=u_1\cdots u_b$, and three distinct vertices $v,x,y$ not contained in the paths such that $v$ is adjacent to all vertices in $V(P) \cup V(Q)$, $xv_1, xu_1, yv_a, yu_b \in E(H)$, and every vertex in $V(H)\setminus \{v\}$ has degree 3 in $G$. Moreover, $G-H$ is connected, both $x$ and $y$ have precisely one neighbour in $G-H$ and no other vertex of $H$ has a neighbour in $G-H$. We call $v$ the \emph{centre} and $x,y$ the \emph{connectors} of the $W_{a,b}$-configuration.
\end{definition}

\begin{figure}[h]
\centering
\subfloat[Subfigure 1 list of figures text][A $W_a$-configuration]{
\includegraphics[width=0.4\textwidth]{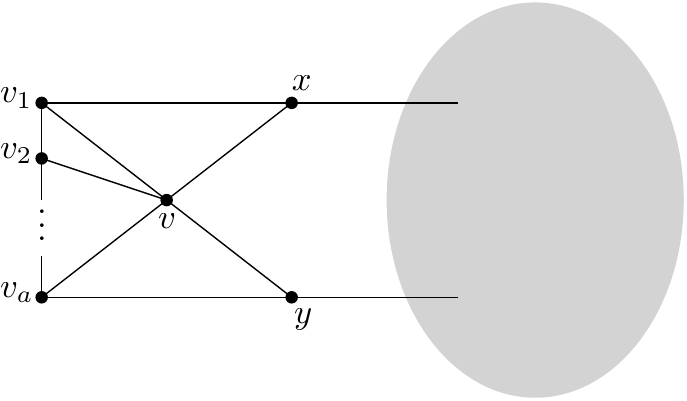}
\label{fig:W_a-conf}}
\qquad
\subfloat[Subfigure 2 list of figures text][A $W_{a,b}$-configuration]{
\includegraphics[width=0.4\textwidth]{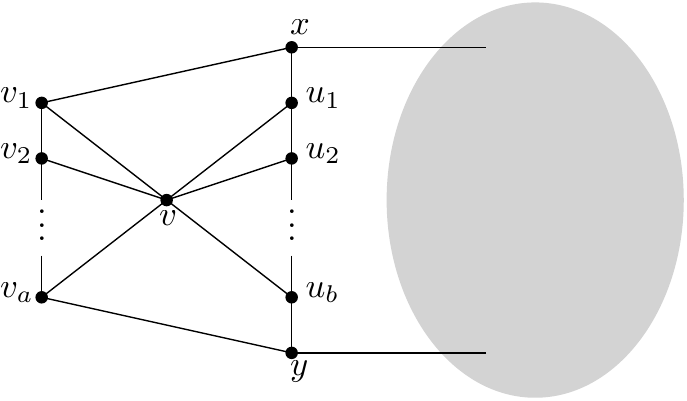}
\label{fig:W_a,b-conf}}
\caption{Two types of graph-configurations.}
\label{fig:graph_fig}
\end{figure}

Lemma~\ref{lem:main} allows us to find the reducible structures we need to finish the proof of Theorem~\ref{thm:no3inarow}.  The proof of this lemma is rather technical and therefore we postpone it to the next section.

\begin{lemma} \label{lem:main}
Let $G$ be a connected graph of minimum degree at least 3. Let $S$ be a set of vertices in $G$ containing all vertices of degree greater than 3 and possibly some vertices of degree 3. Then at least one of the following three conditions is satisfied:
\begin{description}
    \item[(C)] There exists an induced cycle $C$ containing no vertex of $S$ such that $G-E(C)$ is connected.
    \item[(P)] There exists an induced path $P$ with endvertices in $S$ such that $G-E(P)$ is connected.
    \item[(W)] There exists a $W_a$-configuration or a $W_{a,b}$-configuration in $G$ where the center is contained in $S$.
\end{description}
\end{lemma}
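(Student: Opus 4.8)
The plan is to argue by contradiction using the extremal technique behind the theorem of Thomassen and Toft cited above. Suppose that none of (C), (P), (W) holds. First record two bookkeeping facts used throughout. If $C$ is an induced cycle with $G-V(C)$ connected and $V(C)\cap S=\varnothing$, then $G-E(C)$ is connected: since $C$ is induced and $G$ has minimum degree at least $3$, every vertex of $C$ has a third neighbour lying outside $V(C)$, so each vertex of $C$ is attached to the connected set $G-V(C)$ in $G-E(C)$; hence such a $C$ yields (C). Likewise, if $P$ is an induced path with $G-V(P)$ connected whose two endpoints lie in $S$, then each internal vertex of $P$ (having exactly two $P$-edges) and each endpoint (having at least two non-$P$-edges) is attached to $G-V(P)$ in $G-E(P)$, so $P$ yields (P). In particular, if $S=\varnothing$ then the theorem of Thomassen and Toft hands us an induced cycle with connected vertex-complement, automatically avoiding $S$, so (C) holds; thus we may assume $S\neq\varnothing$.

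Next choose an induced path $P=p_0\cdots p_\ell$ with $G-V(P)$ connected and $|V(P)|$ as large as possible; such a path exists (a non-cutvertex is a valid trivial choice), and $V(P)\neq V(G)$ since an induced spanning path would force a vertex of degree $1$. By the bookkeeping above and our assumption, $P$ does not have both endpoints in $S$; say $p_0\notin S$, so $\deg_G(p_0)=3$ and $p_0$ has precisely two neighbours $a,b$ outside $V(P)$ (its third neighbour being $p_1$; there are no chords as $P$ is induced). Maximality of $P$ forbids prepending $a$ or $b$, so each of $a,b$ is blocked: it is adjacent to some $p_i$ with $i\ge 1$, or its deletion disconnects $G-V(P)$. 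In the first case, the least such $i$ yields an induced cycle on $p_0,\dots,p_i,a$, and one tries to enlarge or modify it — absorbing the tail $p_{i+1}\cdots p_\ell$ and/or a component of $(G-V(P))-a$ — into an induced non-separating cycle avoiding $S$, or into an induced path between two vertices of $S$, thereby reaching (C) or (P); in the second case, the cutvertex of $G-V(P)$ is played against the maximality of $P$. The aim of this part is to show that if all such rerouting moves simultaneously fail, then the local picture around $p_0$ (or around a vertex that the reroutings pin down) is extremely rigid.

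The technical heart — and the step I expect to be the main obstacle — is analysing this fully blocked situation. From it I would isolate a vertex $v$ that obstructs every reroute and establish, by a chain of ``otherwise reroute and win'' arguments: (a) $v\in S$, for a degree-$3$ vertex never obstructs — whichever move we were blocked from making can in fact be completed and produces (C) or (P); (b) $G-N_G[v]$ is connected and is joined to $N_G[v]$ by exactly two edges (a third boundary edge, or a disconnection outside, unlocks a reroute); (c) every vertex of the blocked structure other than $v$ has degree exactly $3$, and the link of $v$ is either a single path running through all of $N_G(v)$, or it splits $N_G(v)$ into two paths whose far ends are tied together by two further degree-$3$ vertices. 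Each of (a)–(c) requires its own short surgery argument, and the repeated need to check that connectivity survives every edge deletion and re-addition is what makes the proof long. Granting (a)–(c), the one-path alternative is exactly a $W_a$-configuration centred at $v$ and the two-path alternative is exactly a $W_{a,b}$-configuration centred at $v$, so (W) holds, contradicting our assumption. Finally, the handful of small graphs for which $P$ is too short to run the above analysis are checked by hand; note that no $2$-connectivity reduction is needed, the extremal-path argument tolerating cutvertices, and that a bridge of $G$ is in fact one of the configurations from which a rigid $W$-structure emerges.
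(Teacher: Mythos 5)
There is a genuine gap: what you have written is a plan, not a proof, and the missing part is precisely where all the difficulty of the lemma lives. Your opening reductions are fine (the two ``bookkeeping'' facts, the case $S=\varnothing$, and the observation that a maximal induced path $P$ with $G-V(P)$ connected cannot have both ends in $S$), but from the point where $a$ and $b$ are ``blocked'' onwards you only gesture: ``one tries to enlarge or modify'' the cycle, and the rigidity claims (a)--(c) are asserted to follow from ``a chain of `otherwise reroute and win' arguments'' that are never given. You yourself flag this as the main obstacle. These claims are exactly the content of the lemma. In particular, (a) --- that a degree-$3$ vertex never obstructs every reroute --- is far from automatic: the $W_a$- and $W_{a,b}$-configurations themselves consist almost entirely of degree-$3$ vertices whose presence blocks all the naive cycle/path modifications, so any proof must show that the obstruction forces a high-degree (or $S$-)centre with exactly the wheel-like adjacency pattern, and that the rest of the blocked piece meets $G$ minus the piece in exactly two edges. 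Nothing in the proposal establishes this, and your statement (b) is not even the right target: in a $W_{a,b}$-configuration the connectors $x,y$ lie outside $N_G[v]$ and $N_G[v]$ is joined to the rest of $G$ by four edges, not two, so the rigid structure cannot be phrased as ``$G-N_G[v]$ meets $N_G[v]$ in exactly two edges.''

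For comparison, the paper does not use a maximal non-separating induced path anchored at a degree-$3$ endpoint at all. It takes an endblock $B$, disposes of the $3$-edge-connected case via a Tutte-type non-separating induced path (between two $S$-vertices) and a Thomassen--Toft-type non-separating induced cycle avoiding one vertex, and otherwise picks a $2$-edge-cut of $B$ whose small side $H$ is minimal. The minimality of $H$ is the engine that replaces your ``reroute and win'' steps: it forces $H$ to be $2$-edge-connected, kills putative cut-vertices and extra boundary vertices, and in the hardest case (exactly one $S$-vertex $v$ in $H$, $H-v$ $2$-connected) a non-separating induced $v$--$x$ path $P$ is chosen so that $H-V(P)$ is a tree, after which either a $W_a$/$W_{a,b}$-configuration appears or one of several explicitly constructed cycles ($C_P$, $C_w$, $C_u$) is shown to satisfy (C), with the final obstruction again contradicting the minimality of $H$. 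Your outline has no analogue of this extremal choice and no argument that the ``fully blocked'' local picture around $p_0$ converges to a configuration centred in $S$; until claims (a)--(c) are proved in detail (which would amount to redoing the paper's Case 3 analysis in a different guise), the proposal does not constitute a proof.
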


Notice that all three conditions are indeed necessary. To see that the statement is not true if we omit condition (W), we can consider the following construction. Let $T$ be any homeomorphically irreducible tree. Now for every leaf $t$ in $T$, we add a $W_a$- or $W_{a,b}$-configuration with both connectors joined to $t$. Let $G$ denote the resulting graph, and let $S$ denote the set of vertices which have degree at least 4 or are centres of the configurations. Now every non-trivial block in $G$ consists of a $W_a$- or $W_{a,b}$-configuration together with a vertex of degree 2. It is easy to see that every non-separating cycle in~$G$ contains precisely one vertex of $S$. Moreover, any path containing two vertices of~$S$ also contains a bridge and is therefore separating.

Finally, we use Lemma~\ref{lem:mincounter} and Lemma~\ref{lem:main} to finish the proof of Theorem~\ref{thm:no3inarow}.

\begin{proof}[Proof of Theorem~\ref{thm:no3inarow}]
Let $G$ be a minimal counterexample. By Lemma \ref{lem:mincounter} the minimum degree of $G$ is at least 3. Let $S \subset V(G)$ be the set of vertices in $G$ with degree at least $4$. By Lemma \ref{lem:main} it suffices to consider the following three cases. \\

\textbf{Case 1:} There exists a non-separating induced cycle $C$ containing no vertex of $S$.\\
By the minimality of $G$ there exists a spanning tree $T$ of $G'=G-E(C)$ which is $G'$-good. Now $T$ is also a $G$-good spanning tree of~$G$.   \\

\textbf{Case 2:} There exists an induced path $P$ with endvertices in $S$ for which $G-E(P)$ is connected. We may assume that no interior vertex of $P$ is contained in $S$ by considering a shortest such path. As in Case 1, by minimality of $G$ there exists a spanning tree $T$ of $G'=G-E(P)$ which is $G'$-good. Now $T$ is also a $G$-good spanning tree of $G$.    \\

\textbf{Case 3:} There exists a $W_a$-configuration or a $W_{a,b}$-configuration in $G$.\\
Let $H$ denote such a configuration with centre $v$ and connectors $x$ and $y$, and let $v_1$ denote a common neighbour of $x$ and $v$. By minimality of $G$, the graph $G'=G-(H-x-y)$ has a $G'$-good spanning tree $T$. We can obtain a $G$-good spanning tree of $G$ by adding all edges incident with $x$ and all edges incident with $v$ apart from $vv_1$ and $vy$.
\end{proof}

\section{Proof of Lemma~\ref{lem:main}}

Thomassen and Toft~\cite{Thomassen} proved several results about the existence of non-separating induced cycles. Note that a cycle is called \emph{non-separating} if $G-V(C)$ is connected. If $C$ is induced and all vertices of $C$ have degree 3, then $G-V(C)$ is connected if and only if $G-E(C)$ is connected. The result we use in this section states that under some mild conditions there exists an induced non-separating cycle or a $k$-rail avoiding some subgraph $G'$ of $G$. A \emph{k-rail} in a graph $G$ between two vertices $x$ and $y$ is a collection of $k$ internally disjoint paths joining $x$ and $y$ such that all interior vertices of the paths have degree 2 in $G$.

\begin{lemma}[Thomassen, Toft~\cite{Thomassen}]\label{Lemma-ThomassenToft}
 Let $G$ be a 2-connected graph and let $G'$ be a connected subgraph of $G$ such that $G-V(G')$ contains at least one cycle. Then $G-V(G')$ contains an induced cycle $C$ such that $G-V(C)$ is connected or $G-V(G')$ contains a $k$-rail $R$ for some $k\geq 3$ which is also a $k$-rail in $G$ such that $G-V(R)$ is connected.
\end{lemma}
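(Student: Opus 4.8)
\emph{Reduction.} The plan is to enlarge $G'$ as much as possible and then read off the cycle or rail from the rigid structure that is forced on the complement. First I would note that it suffices to prove the statement for any vertex set $W$ with $V(G')\subseteq W$, $G[W]$ connected and $G-W$ still containing a cycle: a non-separating induced cycle of $G$ lying in $G-W$, or a $k$-rail in $G-W$ that is also a $k$-rail in $G$ with connected complement, automatically lies in $G-V(G')$ and meets the conclusion for $G'$. So I would choose such a $W$ with $|W|$ maximal (it exists, since $W=V(G')$ qualifies) and set $H:=G-W$, which contains a cycle.

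\emph{Structure forced by maximality and $2$-connectivity.} By maximality, for every vertex $u\notin W$ adjacent to $W$ the set $W\cup\{u\}$ induces a connected subgraph, so $H-u$ is acyclic; hence, writing $N$ for the set of vertices of $H$ having a neighbour in $W$, every vertex of $N$ lies on every cycle of $H$. I would then verify, using that $G$ is $2$-connected, that $|N|\ge 2$; that $H$ has no vertex of degree at most $1$ (such a vertex outside $N$ would have degree at most $1$ in $G$, and one inside $N$ would lie on no cycle of $H$, whereas $H$ has a cycle and $N$-vertices lie on all of them); that $H$ is connected and bridgeless (otherwise $H$ would contain two vertex-disjoint cycles, which cannot both meet $N$); and that $H$ has no cut vertex (else $N$ would lie in the intersection of two blocks). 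So $H$ is $2$-connected, and in particular $H-a$ is a tree for every $a\in N$.

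\emph{Reading off the conclusion.} It remains to use the structural fact that a $2$-connected graph $H$ with two distinct vertices $a_1\ne a_2$ for which both $H-a_1$ and $H-a_2$ are trees must be either a single cycle or a $k$-rail between $a_1$ and $a_2$ with $k=d_H(a_1)=d_H(a_2)\ge 3$. If $H$ is a cycle, then, being the induced subgraph $G[V(G)\setminus W]$ with connected complement $G[W]$, it is an induced non-separating cycle of $G$, the first alternative of the lemma. If $H$ is a $k$-rail with $k\ge 3$, then its interior vertices are not in $N$ (only $a_1,a_2$ have the tree property) and hence have degree $2$ in $G$, so $H$ is a $k$-rail in $G$ with $G-V(H)=G[W]$ connected, the second alternative.

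\emph{Main obstacle.} The one non-routine point is the structural fact just used: that deleting either of two distinct vertices from a $2$-connected graph and always obtaining a tree forces a cycle or a generalized theta graph whose hubs are exactly those two vertices. I would prove it by observing that $2$-connectivity makes every leaf of the tree $H-a_1$ a neighbour of $a_1$, while the requirement that $H-a_2$ also be a tree forces $a_1$ to have exactly one neighbour in each component of $(H-a_1)-a_2$; together these pin down the shape of $H$, the cycle being the case $k=2$. Everything else is routine bookkeeping with the maximality of $W$ and the $2$-connectivity of $G$.
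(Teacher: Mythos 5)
The paper does not prove this lemma at all: it is imported verbatim from Thomassen and Toft, so there is no internal proof to compare against. Your blind argument is, as far as I can check, correct and self-contained, and it follows the same general strategy as the original Thomassen--Toft proof: grow $W\supseteq V(G')$ maximal with $G[W]$ connected and $G-W$ still containing a cycle, observe that maximality forces every vertex of $H=G-W$ with a neighbour in $W$ to lie on all cycles of $H$, deduce from $2$-connectivity of $G$ that $|N|\ge 2$, that $H$ has minimum degree $2$, and that $H$ is $2$-connected (your endblock/disjoint-cycles argument is fine once one notes each endblock contains a cycle, which follows from minimum degree $2$), and finally classify $H$ using that $H-a$ is a tree for each $a\in N$. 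The only non-routine ingredient is the classification step, and your sketch of it is sound: each component $D$ of $(H-a_1)-a_2$ meets $N(a_2)$ in exactly one vertex (acyclicity of $H-a_1$) and $N(a_1)$ in exactly one vertex (acyclicity plus connectivity of $H-a_2$), and since every leaf of $H-a_1$ is adjacent to $a_1$, each $D$ is a path, yielding a cycle or a $k$-rail with $k=d_H(a_1)=d_H(a_2)$; alternatively one can get this faster from the edge count $|E(H)|=(n-2)+d_H(a_1)=(n-2)+d_H(a_2)$, which forces all vertices outside $\{a_1,a_2\}$ to have degree exactly $2$. Your final observations, that interior rail vertices are not in $N$ when $k\ge 3$ (so they have degree $2$ in $G$, not just in $H$) and that $G-V(H)=G[W]$ is connected, correctly discharge the two conditions in the statement, so the proposal stands.
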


Instead of Lemma~\ref{Lemma-ThomassenToft}, we use the following lemma which is an easy corollary and more suited for our purposes.

\begin{lemma} \label{lem:new_carsten_toft}
Let $G$ be a 2-connected graph and let $G'$ be a non-empty connected subgraph of $G$ such that $G'$ contains all vertices of degree at least 4 and $G-V(G')$ contains at least one cycle. Then $G-V(G')$ contains an induced cycle $C$ such that $G-V(C)$ is connected.
\end{lemma}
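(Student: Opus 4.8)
The plan is to derive Lemma~\ref{lem:new_carsten_toft} directly from Lemma~\ref{Lemma-ThomassenToft}, using the extra hypothesis that $G'$ contains every vertex of degree at least~$4$ only in order to rule out the ``$k$-rail'' alternative. First I would check that Lemma~\ref{Lemma-ThomassenToft} applies: $G$ is $2$-connected, $G'$ is a non-empty connected subgraph, and $G-V(G')$ contains a cycle. If the lemma produces an induced cycle $C$ of $G-V(G')$ with $G-V(C)$ connected, this is already the desired conclusion and we are done.

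The work is therefore entirely in the remaining case, where Lemma~\ref{Lemma-ThomassenToft} produces a $k$-rail $R$ with $k\ge 3$, lying in $G-V(G')$, which is still a $k$-rail in $G$, and with $G-V(R)$ connected. Let $x,y$ be the endvertices of $R$ and $Q_1,\dots,Q_k$ its internally disjoint paths. Since $R\subseteq G-V(G')$ we have $x,y\notin V(G')$, so by hypothesis $\deg_G(x),\deg_G(y)\le 3$; on the other hand each of $x$ and $y$ is incident with the $k$ first (respectively last) edges of the paths $Q_i$, and since $G$ is simple these are pairwise distinct (at most one $Q_i$ can be a single edge $xy$), so $\deg_G(x),\deg_G(y)\ge k\ge 3$. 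Hence $k=3$ and $\deg_G(x)=\deg_G(y)=3$, with every edge at $x$ or $y$ lying in $R$. As the interior vertices of $R$ have degree~$2$ in $G$, they too have both their edges in $R$; so no edge of $G$ leaves $V(R)$, and connectedness of $G$ gives $G=R$. Thus $G$ is a subdivided theta graph: the union of three internally disjoint $x$--$y$ paths $Q_1,Q_2,Q_3$.

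It then remains to treat this explicit graph. Its cycles are exactly the three cycles $Q_i\cup Q_j$, so since $G-V(G')$ still contains a cycle there is a pair $\{i,j\}$ with $V(Q_i\cup Q_j)\cap V(G')=\emptyset$; writing $Q_m$ for the third path, this forces $V(G')\subseteq V(Q_m)\setminus\{x,y\}$, and in particular $Q_m$ has length at least~$2$ because $G'$ is non-empty. I would then take $C=Q_i\cup Q_j$. This cycle avoids $V(G')$; it is induced, because the interior vertices of $Q_i$ and $Q_j$ have degree~$2$ in $G$ and the third edge at each of $x,y$ runs into the interior of $Q_m$, so is not a chord of $C$; and $G-V(C)$ is exactly the interior of $Q_m$, a non-empty path and hence connected. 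So $C$ is the required cycle. I do not expect a genuine obstacle, since the statement really is a corollary; the only points that need care are the degree count that collapses the rail case to a subdivided theta graph (where simplicity is used), and checking that $C=Q_i\cup Q_j$ is induced, which relies on $Q_m$ having an interior vertex.
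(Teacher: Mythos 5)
Your proposal is correct and follows essentially the same route as the paper: apply Lemma~\ref{Lemma-ThomassenToft} and use the hypothesis that $G'$ contains all vertices of degree at least~4 to show (via the degree count at the endvertices and the degree-2 interior vertices) that no edge of $G$ leaves a $k$-rail $R$ with $k\ge 3$. The only difference is that your concluding theta-graph analysis is superfluous: since $R\subseteq G-V(G')$ and $G'$ is non-empty, the conclusion $G=R$ already contradicts the connectedness of $G$ (which is exactly where the paper's proof stops), so the rail alternative cannot occur at all.
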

\begin{proof}
By Lemma \ref{Lemma-ThomassenToft} it suffices to show that $G-V(G')$ cannot contain a $k$-rail for $k \geq 3$. So suppose $R$ is such a $k$-rail between two vertices $x$ and $y$. Since $G'$ contains all vertices of degree at least 4 in $G$ and since $k \geq 3$, there can be no edges between $R$ and $G-R$, contradicting that $G$ is connected.
\end{proof}

Tutte showed that any pair of vertices in a 3-connected graph $G$ can be connected by an induced path $P$ such that $G-V(P)$ is connected. The following edge-version is an easy application of this theorem.  We give a short self-contained proof which we will also refer to in the proof of Lemma \ref{lem:main}.

\begin{lemma} \label{lem:tutte}
For any two vertices $v_1,v_2$ in a 3-edge-connected graph $G$, there exists an induced path $P$ from $v_1$ to $v_2$ such that $G-E(P)$ is connected. 
\end{lemma}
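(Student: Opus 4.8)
The plan is to prove Lemma~\ref{lem:tutte} by an extremal argument on induced $v_1$-$v_2$ paths rather than by quoting the vertex-version of Tutte's theorem (one could also derive it from that theorem, e.g.\ by expanding every vertex of $G$ into a cycle to obtain a cubic graph, which is $3$-connected because it is $3$-edge-connected, and then pulling back an induced non-separating path; but the direct argument is cleaner and, crucially, reusable in the proof of Lemma~\ref{lem:main}).

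First I would dispose of the case $v_1=v_2$ (the empty path works) and note that a $3$-edge-connected graph is connected and has minimum degree at least $3$. Then I would fix an induced $v_1$-$v_2$ path $P$ for which the number of connected components of $G-E(P)$ is as small as possible, and, subject to that, for which $|E(P)|$ is as small as possible; the goal is to show $G-E(P)$ is connected.

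The first observation is that every component of $G-E(P)$ contains a vertex of $P$: a component disjoint from $V(P)$ keeps all of its $G$-edges in $G-E(P)$ and would therefore be a whole component of $G$, contradicting connectivity. Assuming for contradiction that $G-E(P)$ has components $D_1,\dots,D_r$ with $r\ge 2$, the second observation is that each edge cut $\partial_G(D_i)$ lies in $E(P)$ and, by $3$-edge-connectivity, has at least $3$ edges. I would then reroute $P$. If some $D_i$ contains neither $v_1$ nor $v_2$, then $P$ enters and leaves $D_i$ equally often, so $|\partial_G(D_i)|\ge 4$ and $P$ meets $D_i$ in at least two maximal subpaths; replacing the stretch of $P$ that runs outside $D_i$ between two consecutive such subpaths by a path inside the connected subgraph $G[D_i]$ should merge those subpaths and decrease $r$. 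In the remaining case, $r=2$ and $v_1,v_2$ lie in different components; then $P$ crosses the cut separating them an odd number, hence at least $3$, of times, and rerouting the final crossing stretch through the component of $v_2$ yields the contradiction.

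The hard part is making the rerouting legitimate on two counts. First, the rerouted walk must be converted into an \emph{induced} $v_1$-$v_2$ path: this forces one to route through $G[D_i]$ along shortest subpaths (which are induced in the appropriate induced subgraph, hence in $G$) and to eliminate any chords to distant vertices of $P$ by further shortcutting. Second, one must check that exchanging edges into and out of $P$ in this way does not \emph{increase} the number of components --- equivalently, that deleting the newly used interior edges inside $D_i$ while restoring the old ones leaves the relevant part connected. It is this bookkeeping, rather than any conceptual difficulty, that is the real obstacle; the double minimality of $P$ (first in component count, then in length) is exactly what turns a successful reroute into the desired contradiction.
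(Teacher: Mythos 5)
Your overall plan (an extremal choice of induced $v_1$--$v_2$ path followed by rerouting through a component of $G-E(P)$) is the same kind of argument the paper uses, but the step you set aside as ``bookkeeping'' is in fact the entire content of the lemma, and with your choice of extremal measure it is not clear it can be carried out. If you replace the stretch $R$ of $P$ between two consecutive visits to $D_i$ by a path $Q$ inside $G[D_i]$, then in $G-E(P')$ the edges of $R$ are restored (merging $D_i$ with the components met by the interior of $R$), but the edges of $Q$ are deleted, and deleting the edges of a path from the connected graph $D_i$ can shatter it into many pieces; nothing in your setup shows the total number of components goes down rather than up. The same uncontrolled trade-off reappears when you convert the rerouted walk into an induced simple path (each chord shortcut removes the chord from $G-E(P')$ and restores the bypassed subpath edges), and also when $Q$ happens to meet retained vertices of $P$, in which case $P'$ is not even a simple path. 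So as written the proposal asserts, rather than proves, that the reroute strictly improves your measure, and the secondary minimization of $|E(P)|$ does not obviously rescue this.

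The paper's proof avoids exactly this difficulty by tracking a monotone quantity: it chooses $P$ maximizing the size (number of edges) of the largest component $K$ of $G-E(P)$, and it only ever reroutes through a component $L\neq K$, replacing a $z_1z_2$-subpath of $P$ with $z_1,z_2\in L$ that passes through a vertex of $K$ by a path inside $L$. The deleted edges then lie entirely in $L$, while the freed subpath edges attach to $K$, so the component containing $K$ strictly grows no matter how badly $L$ splits; no global count of components has to be controlled. With that observation in hand the conclusion comes not from further rerouting but from edge-connectivity: if the first (or last) vertex of $P$ in $K$ were not $v_1$ (or $v_2$) one finds a cut edge, and if some vertex of $P$ lay outside $K$ one finds a 2-edge-cut consisting of two edges of $P$, contradicting 3-edge-connectivity. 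To repair your argument you should either switch to such a monotone measure or supply the missing proof that your reroute, made induced, strictly decreases the number of components; as it stands this is a genuine gap. (Your parenthetical alternative via Tutte's theorem also needs care: replacing the vertices of a 3-edge-connected graph by cycles does not automatically yield a 3-connected graph -- the cyclic orders must be chosen suitably -- and pulling an induced non-separating path back to $G$ is not immediate.)
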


\begin{proof}
Let $P$ be a path from $x$ to $y$ which maximizes the size of the largest connected component of $G-E(P)$. Clearly, we may assume that $P$ is an induced path in $G$. Let~$K$ denote the largest component of $G-E(P)$. Notice that 
\begin{description}
    \item[(*)] for any vertices $z_1,z_2$ on $P$ belonging to the same component $L\neq K$ of $G-E(P)$, the $z_1z_2$-subpath of $P$ does not contain any vertices of $K$,
\end{description}
since otherwise we could replace this $z_1z_2$-subpath by a path in $L$ to obtain a new $v_1v_2$-path $P'$ for which the component of $G-E(P')$ containing $K$ is strictly larger than before, contradicting our choice of $P$. Let $k_1$ and $k_2$ denote the first and last vertex on $P$, respectively, which is contained in $K$. If $k_1\neq v_1$, then let $e$ denote the last edge of the $v_1k_1$-subpath of $P$. By (*), the edge $e$ is a cut-edge in $G$ which contradicts 3-edge-connectivity. 

Thus, we may assume that $k_1=v_1$ and similarly $k_2=v_2$. If $G-E(P)$ is not connected, then there exists a vertex on $P$ which is not in $K$. Let $w$ be the first such vertex on the path from $v_1$ to $v_2$. Let $k$ denote the first vertex on the $wv_2$-subpath of $P$ which is contained in $K$, see Figure~\ref{fig:proof_tutte}. Let $e_w$ and $e_k$ denote the last edge of the $v_1w$-subpath of $P$ and of the $v_1k$-subpath of $P$, respectively. By (*), the edges $e_w,e_k$ form a 2-edge-cut in $G$, contradicting 3-edge-connectivity. Thus, every vertex of $G-E(P)$ is contained in $K$ and $G-E(P)$ is connected.
\end{proof}

\begin{figure}
    \centering
    \includegraphics[scale = 0.9]{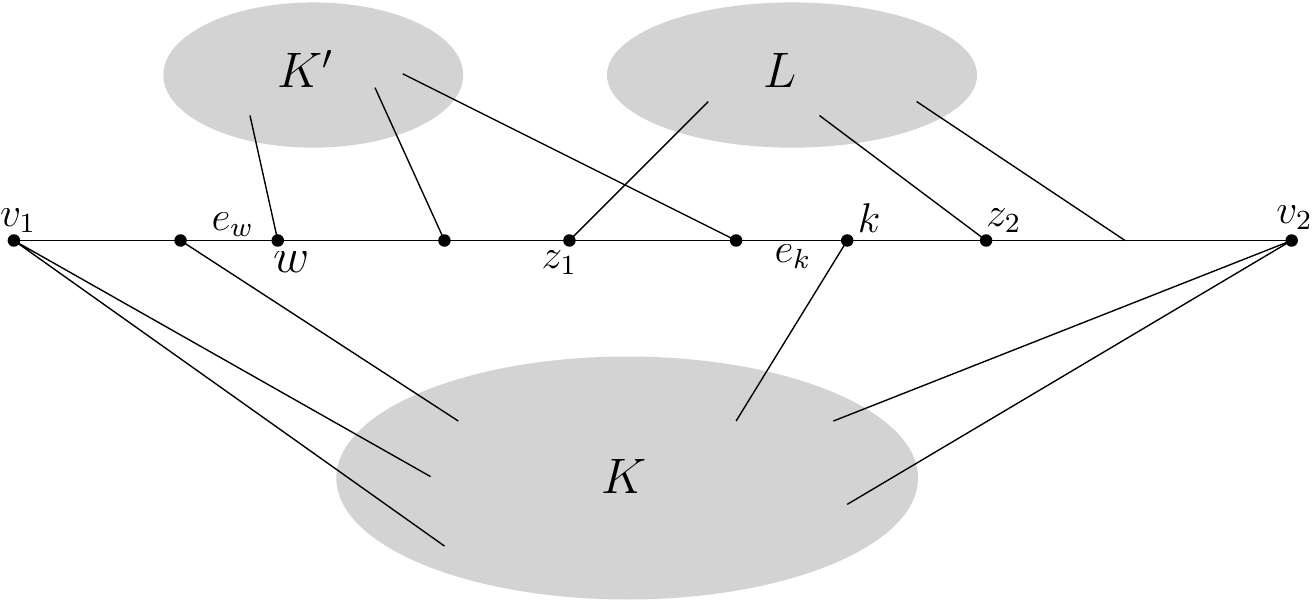}
    \caption{Proof of Lemma~\ref{lem:tutte}.}
    \label{fig:proof_tutte}
\end{figure}

In the proof of Lemma~\ref{lem:main} we investigate the block structure of $G$. We refer the reader to~\cite{Diestel} for the definitions of the block decomposition and block graph. An \emph{endblock} is a block which corresponds to a vertex of degree at most 1 in the block graph.

If $G$ contains a 3-edge-connected endblock, then the proof of Lemma~\ref{lem:main} is short. If not, then we choose a 2-edge-cut minimizing the size of a component $H$. We distinguish three cases depending on how many vertices of $S$ are contained in $H$. The proofs are short unless $H$ contains precisely one vertex of $S$, say $v$. In this case we further investigate the structure of $H-v$. Again, the proofs are short unless $H-v$ is 2-connected and $H$ contains two vertices $x$ and $y$ which have neighbours in $G-H$. If this is the case, we find an induced path $P$ from $v$ to $x$ such that $G-V(P)$ is connected and $H-V(P)$ is a tree. If $P$ has at most two edges, then we can find a $W_{a,b}$-configuration. If $P$ has at least three edges, then we consider several cycles and show that one of them satisfies (C), finishing the proof.

\begin{proof}[Proof of Lemma~\ref{lem:main}]
Let $B$ be an endblock of $G$. First suppose that $B$ is $3$-edge connected. Since $S$ contains all vertices of degree at least 4 in $G$, this implies that if there is a cut-vertex of $G$ in $B$ then that cut-vertex belongs to~$S$. If~$B$ contains at least two vertices of $S$, then we can use Lemma~\ref{lem:tutte} to find a non-separating induced path between them which then satisfies (P). Thus, we can assume that $B$ contains at most one vertex of $S$, say $v$. If $B$ contains no vertex of $S$, let $v$ denote an arbitrary vertex of~$B$. Since~$B$ is 2-connected and $B-v$ has minimum degree 2 and thus contains a cycle, we can use Lemma \ref{lem:new_carsten_toft} to find a non-separating induced cycle in $B$ not containing~$v$, and this cycle is also non-separating in $G$ and satisfies (C).

Thus, we may assume that $B$ is not 3-edge connected. If $G$ is not 2-connected let~$b$ denote the unique cut-vertex of $G$ in $B$. Choose a 2-edge cut in $B$ minimizing the size of the component $H$ not containing $b$ (if $b$ does not exist we just minimize the size of some component $H$). Note that the choice of $H$ implies that $H$ is 2-edge-connected and contained in the endblock $B$. \\

\textbf{Case 1:} $H$ contains no vertex of $S$.\\
Since every vertex in $H$ has degree 3 in $G$, every cutvertex of $H$ would give rise to a connected subgraph $H'\subset H$ which can be separated from $G-H'$ by at most 2 edges, contradicting our choice of $H$. Hence, we can assume that~$H$ is 2-connected. Let $x\in V(H)$ be a vertex joined to $G-H$. Notice that $H-x$ has minimum degree 2 and thus contains a cycle. By Lemma~\ref{lem:new_carsten_toft}, there exists a non-separating induced cycle in $H$ not containing $x$. This cycle is also non-separating in $G$ and thus satisfies (C).\\

\textbf{Case 2:} $H$ contains at least two vertices of $S$.\\
In this case, let $u_1$ and $u_2$ denote two vertices in $H$ contained in $S$. Let $P$ be an induced path from $u_1$ to $u_2$ in $H$ which maximizes the size of the connected component $K$ of $G-E(P)$ containing $G-H$.\\

\textit{Claim:} $G-E(P)$ is connected. \\ 
\emph{Proof of Claim:} Suppose $G-E(P)$ is not connected. As in the proof of Lemma \ref{lem:tutte} we have that for any vertices $z_1,z_2$ on $P$ belonging to the same component $L\neq K$ of $G-E(P)$, the $z_1z_2$-subpath of $P$ does not contain any vertices of $K$. Let $k_1$ and $k_2$ denote the first and last vertex on $P$, respectively, which is contained in $K$. If $k_1 \neq u_1$, then the last edge of the $u_1k_1$-subpath of $P$ is a cut-edge in $G$ which contradicts $H$ being 2-edge-connected.\\
Thus, we may assume $u_1=k_1$ and similarly $u_2=k_2$. Let $w$ be the first vertex on the path from $u_1$ to $u_2$ which is not contained in $K$. Let $k$ denote the first vertex on the $wu_2$-subpath of $P$ which is contained in $K$. Let $e_w$ and $e_k$ denote the last edge of the $u_1w$-subpath of $P$ and of the $u_1k$-subpath of $P$, respectively. As in the proof of Lemma \ref{lem:tutte}, the edges $e_w,e_k$ form a 2-edge-cut in $G$, contradicting the choice of $H$. Thus, every vertex of $G-E(P)$ is contained in $K$ and $G-E(P)$ is connected. $\qedhere$\\

Thus, $G-E(P)$ is connected and $P$ satisfies (P).\\

\textbf{Case 3:} $H$ contains precisely one vertex $v$ of $S$.\\
We distinguish three cases depending on the structure of $H-v$. Notice that if~$H$ contains a cutvertex $w$, then by 2-edge-connectivity of $H$, the vertex $w$ has at least two neighbours in every block of $H$ it is contained in. In particular, $w$ has at least degree 4 and thus $w=v$ is the only possible cutvertex of $H$. Thus, Case~3.1 is identical to the case where $H$ is not 2-connected.\\

\textbf{Case 3.1:} $H-v$ is disconnected.\\
Let $B_H$ be a block of $H$ which contains at most one vertex with a neighbour in $G-H$. Now $B_H-v$ contains at most one vertex of degree 1, hence there exists a cycle in $B_H-v$. By Lemma~\ref{lem:new_carsten_toft}, $B_H$ contains a non-separating induced cycle $C$ avoiding $v$, see Figure~\ref{fig:lem_3_6_case_31}. The cycle $C$ is also non-separating in $G$ and thus satisfies (C).\\

\begin{figure}
    \centering
    \begin{minipage}{0.4\textwidth}
    \includegraphics[scale=.5]{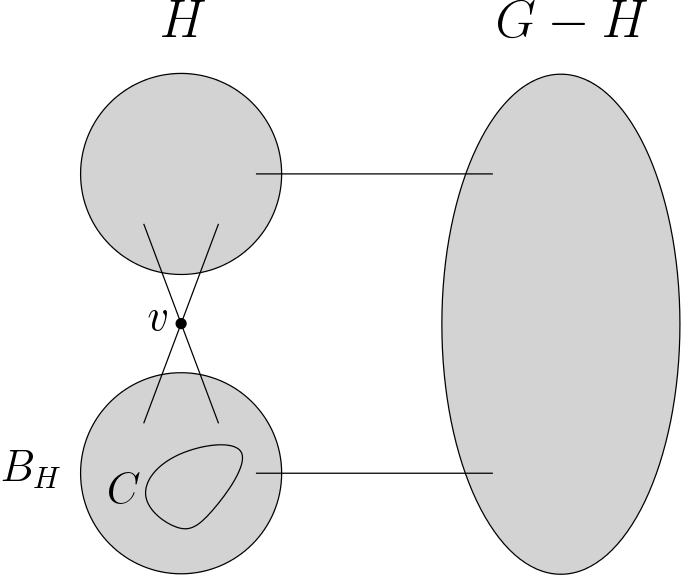}
    \caption{$H-v$ is disconnected}
    \label{fig:lem_3_6_case_31}
    \end{minipage}
    \hfill
    \begin{minipage}{0.4\textwidth}
     \includegraphics[scale=.5]{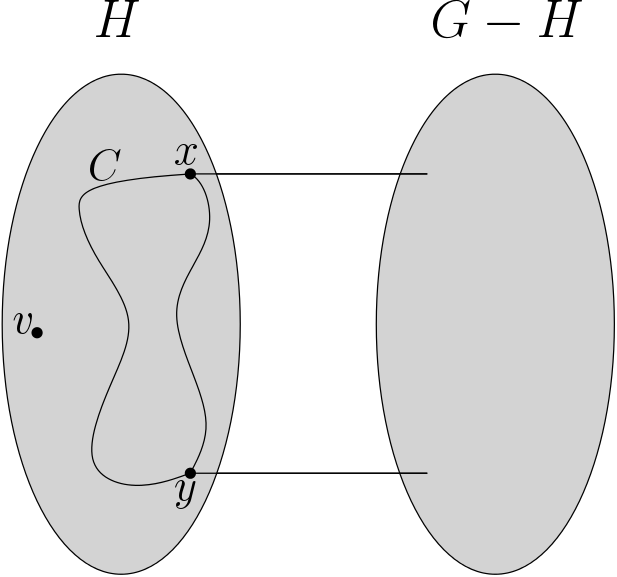}
    \caption{Case 3.3}
    \label{fig:lem_3_6_case_33}
    \end{minipage}
\end{figure}

\textbf{Case 3.2:} $H-v$ is a tree.\\
Notice that every vertex of degree 1 in $H-v$ must have a neighbour in $G-H$. Hence, there are at most two vertices of degree 1 in $H-v$. In particular, $H-v$ is a path $P=xv_1\cdots v_ay$ where both $x$ and $y$ have a neighbour in $G-H$. Thus, there exists a $W_a$-configuration in $G$ and (W) is satisfied. \\

\textbf{Case 3.3:} $H-v$ is connected and contains a cycle.\\
By Lemma~\ref{lem:new_carsten_toft}, there exists a non-separating induced cycle $C$ in $H$ avoiding $v$. If $C$ does not satisfy (C), then $C$ contains every vertex of $H$ with neighbours in $G-H$. If there exists only one such vertex $x$, then $x$ has degree 3 and $H'=H-x$ would be a smaller graph that can be separated from $G-H'$ by a 2-edge-cut, contradicting the minimality of $H'$. Thus, we can assume that $C$ contains two vertices $x$ and $y$ which have neighbours in $G-H$, see Figure~\ref{fig:lem_3_6_case_33}. Notice that $x$ and $y$ are not adjacent since otherwise the graph $H-x-y$ would contradict the minimality of $H$.\\

\textbf{Case 3.3.1:} $H-v$ has a cut-vertex $w$.\\
Since $w$ has degree at most 3 in $H-v$, there exists an edge $e$ incident with $w$ such that $H'=H-v-e$ is disconnected. Notice that since $C$ does not contain $v$, and since $e$ is a cut-edge in $H-v$, the cycle $C$ also exists in $H'$. In particular, $x$ and $y$ are contained in the same block of $H-e$. By definition, $v$ is a cutvertex in $H-e$. Suppose $H-e$ contains a cutvertex $v'\neq v$. Since $v'$ has degree at most 3 in $H-e$, there exists an edge $e'$ which is a cut-edge in $H-e$. Now $e,e'$ form a 2-edge-cut which contradicts our choice of $H$, see Figure~\ref{fig:lem_3_6_case_331_1}. Thus, $v$ is the only cutvertex in $H-e$ and $H-e$ has exactly two blocks. Let $B_H$ be the block of $H-e$ containing neither $x$ nor $y$. Notice that $B_H$ contains an end of $e$. There exists at most one vertex of degree 1 in $B_H-v$, hence $B_H-v$ contains a cycle. By Lemma~\ref{lem:new_carsten_toft}, there exists a non-separating induced cycle $C'$ in $B_H$ avoiding $v$. The cycle $C'$ is also non-separating in $G$ and thus satisfies~(C).\\
\begin{figure}
    \centering
    \includegraphics[scale = 0.8]{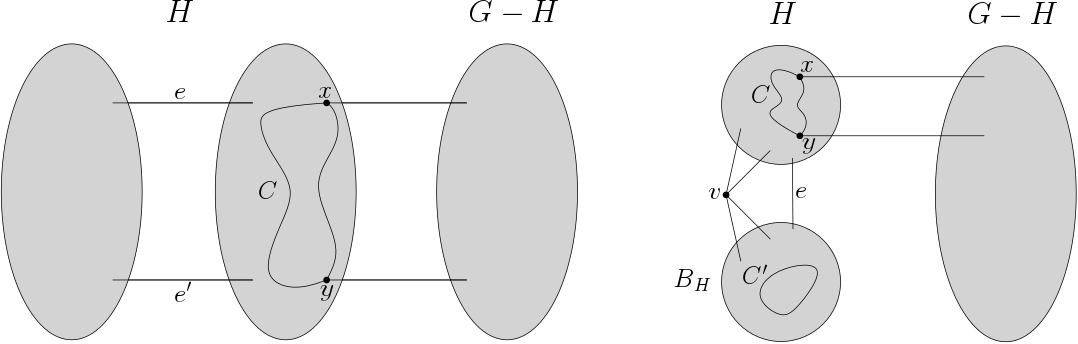}
    \caption{Case 3.3.1}
    \label{fig:lem_3_6_case_331_1}
\end{figure}

\textbf{Case 3.3.2:} $H-v$ is 2-connected. \\

\textit{Claim:} There exists an induced path $P$ in $H$ from $v$ to $x$ such that $G-V(P)$ is connected. \\ 
\emph{Proof of Claim:} Let $P$ be an induced path in $H$ from $v$ to $x$, such that the size of the component $K$ of $G-V(P)$ containing $G-H$ is maximum. Notice that since $v$ is the only vertex of degree greater than 3 in $H$, the graph $G-V(P)$ is connected if and only if $G'=G-E(P)$ is connected and $v$ is not a cutvertex in $G'$. First, suppose that $G'$ is disconnected and let $K'$ be the component of $G'$ containing $G-H$. As in the proof of the claim in Case 2 and the proof of Lemma \ref{lem:tutte}, we have that for any vertices $z_1,z_2$ on $P$ belonging to the same component $L\neq K'$ of $G'$, the $z_1z_2$-subpath of $P$ does not contain any vertices of $K'$. Note that $x \in V(K')$ since $x$ has a neighbour in $G-H$ and as in the proof of Lemma \ref{lem:tutte} we can also assume $v \in V(K')$. Let $w$ be the first vertex on the path from $v$ to $x$ which is not contained in $K'$. Let $k$ denote the first vertex on the $wx$-subpath of $P$ which is contained in $K'$. As in the proof of the claim in Case 2 we find two edges incident to $w$ and $k$, respectively, which form a 2-edge-cut in $G$ contradicting the choice of $H$. Thus $G'$ is connected.\\ 
Now suppose that $v$ is a cutvertex in $G'$. Let $z$ denote the first vertex on $P$ after $v$ which has a neighbour in $K$. Let $L$ denote a component different from $K$ in $G-V(P)$, in particular $L$ is adjacent to $v$. Similar to before, no component $M\neq K$ of $G-V(P)$ has two neighbours $z_1$, $z_2$ on $P$ such that $z$ is contained in the $z_1z_2$-subpath of $P$. Thus, the graph $H-\{v,z\}$ is disconnected, which contradicts 2-connectivity of $H-v$. \hfill $\square$ \\

Let $P$ be the path from the claim above. Clearly we must have $y \notin V(P)$. If $H-V(P)$ contains a cycle, then we can use Lemma~\ref{lem:new_carsten_toft} to find an induced cycle $C'$ avoiding $P$ for which $H-E(C')$ is connected. Since $C'$ does not contain $x$, it is also non-separating in $G$ and thus satisfies (C). Therefore, we may assume that $T=H-V(P)$ is a tree. We distinguish two cases depending on the length of $P$.\\

\textbf{Case 3.3.2.1} $|E(P)|\leq 2$.\\
Since $x$ is contained in a cycle not containing~$v$, we have $|E(P)|=2$. Let $u$ denote the middle vertex of $P$. Since $y$ is not adjacent to $v$, each leaf of $T$ is adjacent to at least one of $x$ and $u$. The vertices $x$ and $u$ are each only adjacent to one vertex outside of $P$, thus the tree $T$ can contain at most two leaves and is therefore a path. Every vertex on $T$ of degree 2 and different from $y$ is adjacent to $v$. This shows that there exists a $W_{a,b}$-configuration (with $v$ as its centre) in $G$ and (W) is satisfied.  \\

\begin{figure}
    \centering
    \includegraphics[scale = 1.5]{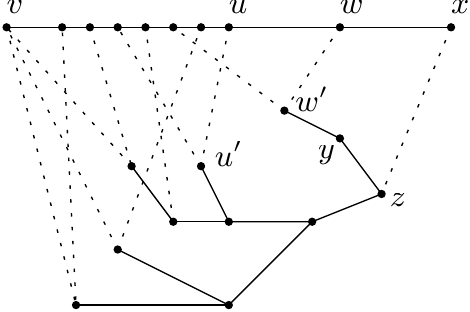}
    \caption{An example of how $T$ in Case 3.3.2.2 could look like.}
    \label{fig:case3322}
\end{figure}
\textbf{Case 3.3.2.2} $|E(P)|\geq 3$.\\
Let $u$ denote the vertex at distance 2 of $x$ on $P$, and $w$ the neighbour of $x$ on $P$. Since $P$ is induced and $u$ and $w$ have degree 3 in $H$, there exist vertices $u', w'$ adjacent to $u, w$ and not contained in $P$. First suppose $u' = w'$. Since $u'$ has degree 3 and $T$ is connected, the vertex $u'$ is not adjacent to $x$. Now $uu'w$ is a non-separating induced cycle in $H$ which also satisfies (C). Thus, we may assume that $u'\neq w'$. Let $P'$ denote the (unique) path in $T$ connecting $u'$ and $w'$. Let $C_P$ denote the induced cycle consisting of $P'$ and the edges $w'w, wu,uu'$. Notice that every component of $T-V(P')$ contains at least one leaf of $T$ and that every such leaf apart from $y$ is adjacent to two vertices of $P-u-w$. Since $x$ is not adjacent to $y$, it follows that every component of $T-V(P')$ has a neighbour in $V(P)\setminus\{u,w,x\}$. Thus, $H-x-V(C_P)$ is connected. If $y$ is not contained in $C_P$ or $x$ has a neighbour in $H-V(C_P)$, then $C_P$ satisfies (C). See Figure~\ref{fig:case3322} for a specific example of $H$ where $C_p$ does not satisfy (C).

If $C_P$ does not satisfy (C), let $z$ denote the neighbour of $x$ in $T$ and define the cycles $C_w$ and $C_u$ as follows: the cycle $C_w$ consists of the $zw'$-subpath of $P'$ together with the edges $w'w, wx, xz$, while $C_u$ consists of the $zu'$-subpath of $P'$ together with the edges $u'u,uw,wx,xz$, see Figure~\ref{fig:lem3_6_case_3322_cycles.png}. It is easy to see that $C_w$ satisfies (C) unless $y$ is contained in the $zw'$-subpath of $P'$. Thus, we may assume that $y$ is contained in the $zw'$-subpath and $y\neq z$. Notice that this implies $w'\neq z$ and hence $C_u$ is induced. Suppose that $G-V(C_u)$ is not connected. Clearly $G-V(C_u)$ has at most two components: one containing $w'$ and one containing $v$. Let $K_{w'}$ denote the connected component of $G-V(C_u)$ containing $w'$. Let $\ell$ be a leaf in $T$ contained in $K_{w'}$. If $\ell \neq y$, then $\ell$ has a neighbour on $P-V(C_u)$ and therefore $K_{w'} = G-V(C_u)$. Thus, we may assume that $y$ is a leaf in $T$ and no other leaf of $T$ is contained in $K_{w'}$. Since $y$ is contained in $C_w$ we have $w'=y$. If $y$ is not adjacent to $z$, then there exists a vertex $k$ distinct from $z,w'$ on the $zw'$-path in $T$. Since $K_{w'}$ contains only one leaf of $T$, the vertex~$k$ has degree 2 in $T$. Therefore, $k$ has a neighbour on $P-V(C_u)$ and we again get the contradiction $K_{w'} = G-V(C_u)$. Finally, suppose that $y$ is adjacent to $z$, see Figure~\ref{fig:lem3_6_case_3322_new_2_cut.png}. In particular, $y$ is the only vertex in $K_{w'}$. Now the graph $H'=H-x-y-w-z$ can be separated from $G-H'$ by a 2-edge-cut, contradicting our choice of $H$.\hfill $\square$
\end{proof}

\begin{figure}
    \centering
    \includegraphics[scale=.7]{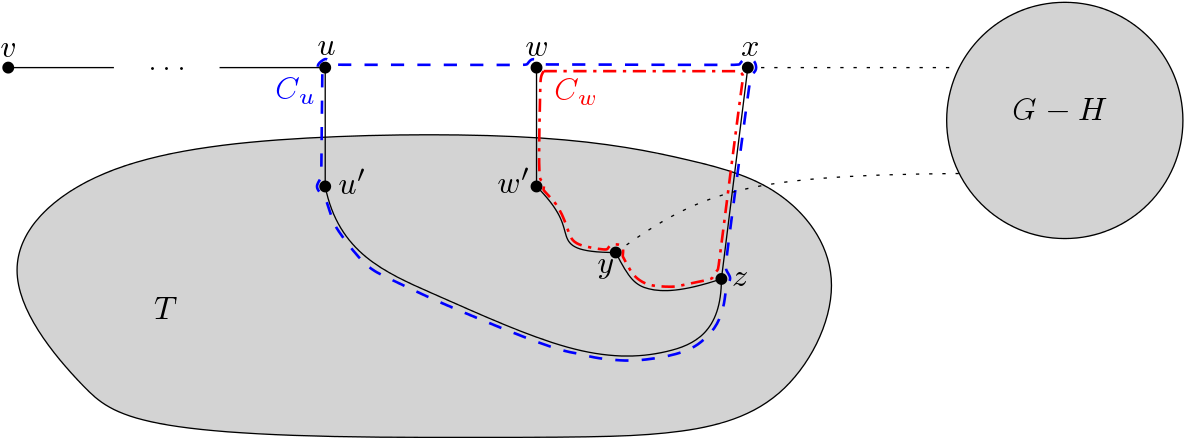}
    \caption{The cycles $C_u$ and $C_w$ in Case 3.3.2.2.}
    \label{fig:lem3_6_case_3322_cycles.png}
\end{figure}
\begin{figure}
    \centering
    \includegraphics[scale=.75]{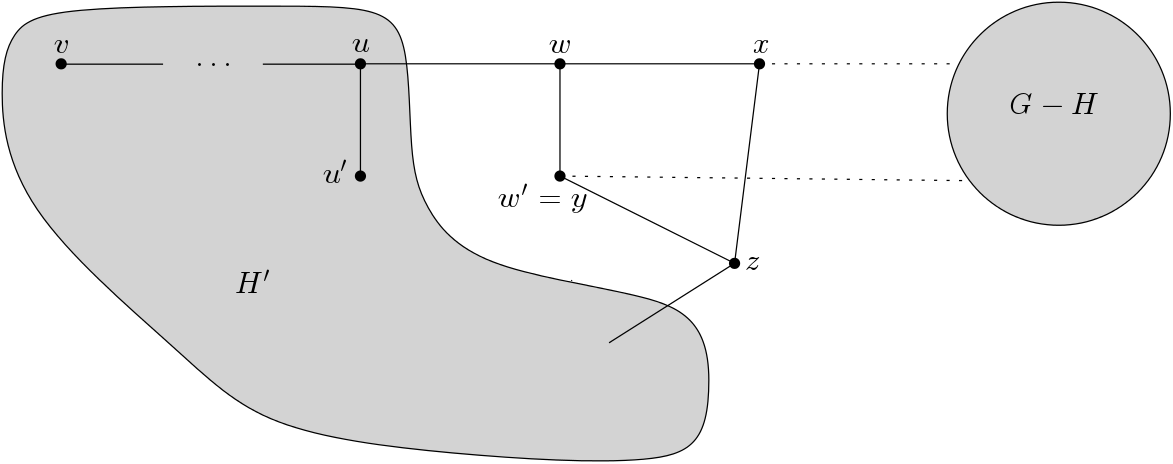}
    \caption{Case 3.3.2.2 where $y=w'$ and $y$ is adjacent to $z$.}
    \label{fig:lem3_6_case_3322_new_2_cut.png}
\end{figure}

\bibliographystyle{acm}
\bibliography{bibliography}
\end{document}